\newtheorem{thm}{Theorem}[section]
\newtheorem{prop}[thm]{Proposition}
\newtheorem{lem}[thm]{Lemma}
\theoremstyle{definition}
\newtheorem{defn}[thm]{Definition}
\newtheorem{notn}[thm]{Notation}
\theoremstyle{remark}
\newtheorem{rem}[thm]{Remark}
\let\c@equation\c@thm
\numberwithin{equation}{section}
\title{The Eigenvalue Distribution of the Watts-Strogatz Random Graph}
\author{Poramate Nakkirt}
\begin{document}
\footnote{This paper is based on the author's undergraduate honors thesis at the University of Colorado Boulder, available on \url{https://scholar.colorado.edu/concern/undergraduate_honors_theses/cz30pt78x}. Unlike this paper, the thesis version \cite{7} is only intended for consideration for Latin honors for the bachelor's degree in mathematics, not intended for publication.}
\begin{abstract}
This paper studies the eigenvalue distribution of the Watts-Strogatz random graph, which is known as the ``small-world" random graph. The construction of the small-world random graph starts with a regular ring lattice of $n$ vertices; each has exactly $k$ neighbors with equally $\frac{k}{2}$ edges on each side. With probability $p$, each downside neighbor of a particular vertex will rewire independently to a random vertex on the graph without allowing for self-loops or duplication. The rewiring process starts at the first adjacent neighbor of vertex $1$ and continues in an orderly fashion to the farthest downside neighbor of vertex $n$. Each edge must be considered once. We focus on the eigenvalues of the adjacency matrix $A_n$, used to represent the small-world random graph. The moments generally decide its distribution. 
We compute the first moment, second moment, and prove the limiting third moment as $n \to \infty$ of the eigenvalue distribution. 
\\\\
\noindent \textbf{Keywords.}
The Watt-Strogatz random graph, small-world random graph, adjacency matrix, random matrix, eigenvalue distribution, method of moments
\\\\
\end{abstract}

\maketitle
\tableofcontents
\section{Introduction}\label{d1}
The Watts-Strogatz random graph is usually called the ``small-world" random graph.  This random graph was discovered by Watts and Strogatz in 1998 who aimed to study the behavior of a random graph that interpolates between a regular graph and a (highly-disordered) random graph \cite{9}. In \cite{10}, Watts and Strogatz constructed a small-world random graph by rewiring some edges in a regular ring lattice with $n$ vertices and degree $k$. However, even though Watts and Strogatz introduced a new construction of the random graph, their graph still preserves two properties: high clustering (like a regular graph) and low average path length or the average number of separation between two vertices (like a highly-disordered graph) \cite{4}. The following is the construction of a small-world random graph $\mathbf{G}$
\cite{1}\cite{3}\cite{7}\cite{8}\cite{9}\cite{10}\cite{12}. 
\newline
\noindent \textbf{Define:}
\begin{enumerate}
    \item $\mathbf{N}(i)$ is a set of all vertices $v$ such that the edge $\{i,v\}$ is in the graph.
    \item The vertex $i \pm d$ for any $d \in \mathbb{N}$ to represent the vertex $i \pm d \mathrm{\ (mod\ n)}$.
\end{enumerate}
\textbf{Required:}
\begin{enumerate}
    \item The parameters $n \in \mathbb{}N$, $k \in 2\mathbb{N}$, and $p \in [0,1]$.
    \item The undirected regular ring lattice on the vertex set $\{1,2,...,n\}$ with the degree $k \in 2\mathbb{N}$, where for each vertex half of the edges $(\frac{k}{2} \in \mathbb{N})$ are on the upside and half of the edges $(\frac{k}{2} \in \mathbb{N})$ are on the downside.
\end{enumerate}
\textbf{Algorithm:}
\begin{itemize}
    \item Consider vertex $i$ and the edges $\{i,j\}$ for $j = i+1,i+2,...,i+\frac{k}{2}$
    \begin{itemize}
        \item With probability $1-p$, we keep the edge $\{i,j\}$.
        \item Otherwise,
        \begin{itemize}
            \item The vertex $j'$ is chosen uniformly at random from \\ $\{1,2,...,n\} \backslash (\{i-\frac{k}{2},...,i-1,i,i+1,...,i+\frac{k}{2} \} \cup \mathbf{N}(i))$, to guarantee that the edge $\{i,j'\}$ does not make a self-loop or duplication. 
            \item Replace the edge $\{i,j\}$ by $\{i,j'\}$.
        \end{itemize}
    \end{itemize}
    \item Repeat this algorithm until all vertices $i = 1,2,...,n$ have been considered once.
    \item \textbf{Output:} \textbf{G}
\end{itemize}

\begin{defn}
Given three parameters $n \in \mathbb{N}$ is the total number of vertices, $k \in 2\mathbb{N}$ is the number of each vertex's neighbor (degree), and $p \in [0,1]$ is the rewiring probability. Let $SW(n,k,p)$ represents a small-world random graph that is created by above \textbf{Algorithm}. 
\end{defn}
\begin{rem}
 In this random graph, we assume $n \gg  k$ \cite{4}\cite{10}. 
\end{rem}
\noindent We can see the examples of $SW(n,k,p)$ random graph in \cite{7}. 
\section{The Eigenvalue Distribution}
When we create a small-world random graph, it is important to know how to study the eigenvalue distribution of the random graph. We begin with representing a small-world graph by the adjacency matrix and use the method of moments to primarily study the behavior of the eigenvalue distribution and properties of the small-world random graph. 
\begin{defn}
Let $\{1,2,...,n\}$ be a set of vertices of the graph. The adjacency matrix $A_n$ is the square $n \times n$ matrix such that its elements are $1$ or $0$ based on if any two vertices are adjacent or not.\\
For $i,j \in \{1,2,...,n\}$,
\begin{equation}
    A_{ij} = \left\{\begin{array}{lr}
        1, & \mathrm{the \ edge \ }\{i,j\} \mathrm{\ is \ in\ graph\ } \\
        0, & \mathrm{Otherwise}
        \end{array}\right\}
\end{equation}
\end{defn}
The notation $\sim$ signifies the adjacency matrix being used to represent the random graph. For instance, $M \sim SW(n,k,p)$ means the adjacency matrix $M$ represents the small-world random graph with given parameters $n,k,p$. \\
\indent For the small-world random graph, all edges on the graph are undirected. For any adjacency matrix $A_n \sim SW(n,k,p)$, the entries $A_{ij} = A_{ji}$ since an edge $\{i,j\}$ is the same as $\{j,i\}$. 

\begin{prop}
    For the small-world random graph, let $A_n \sim SW(n,k,p)$ for $n,k,p$ are constants, then $A_n$ is symmetric. 
\end{prop}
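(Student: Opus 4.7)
The plan is to reduce the claim directly to the definition of the adjacency matrix together with the fact that every edge in $SW(n,k,p)$ is unordered. Symmetry of $A_n$ just means $A_{ij}=A_{ji}$ for all $i,j\in\{1,2,\dots,n\}$, so I would verify this equality entrywise and be done. No probabilistic argument, no spectral argument, and nothing about the rewiring mechanism beyond the statement that the rewiring produces an undirected graph, is required.

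First I would fix arbitrary indices $i,j\in\{1,\dots,n\}$ and unpack the definition of $A_n$: by the definition given in the preceding paragraph, $A_{ij}=1$ if the edge $\{i,j\}$ lies in the output graph $\mathbf G$ and $A_{ij}=0$ otherwise, and similarly $A_{ji}=1$ if $\{j,i\}\in\mathbf G$ and $0$ otherwise. Then I would appeal to the construction: the initial regular ring lattice is undirected, and the rewiring step only deletes an edge $\{i,j\}$ and replaces it with another unordered edge $\{i,j'\}$; at no point is an orientation introduced. Hence the edge sets on the left- and right-hand sides are described by the same unordered pair, and $\{i,j\}\in\mathbf G\iff\{j,i\}\in\mathbf G$.

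Combining the two bullets gives $A_{ij}=A_{ji}$ for every $i,j$, which by definition means $A_n=A_n^{\mathsf T}$. Since $i,j$ were arbitrary, this holds for every realization of $A_n$ and hence almost surely, independent of the values of the constants $n$, $k$, and $p$.

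The main (and essentially only) obstacle is conceptual rather than technical: one must be careful that the asymmetric description of the algorithm, which processes the edges $\{i,j\}$ for $j=i+1,\dots,i+\tfrac{k}{2}$ in a prescribed order and only rewires on the ``downside,'' does not actually produce a directed graph. I would emphasize in the write-up that although the algorithm treats the two endpoints of a potential edge asymmetrically during construction, the object it adds to or removes from the edge set is still the unordered pair $\{i,j\}$, so the resulting $\mathbf G$ remains undirected and the adjacency matrix is automatically symmetric.
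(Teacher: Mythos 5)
Your proposal is correct and matches the paper's own justification, which is simply the observation (stated in the paragraph preceding the proposition) that every edge of $SW(n,k,p)$ is an unordered pair $\{i,j\}$, so $A_{ij}=A_{ji}$ by the definition of the adjacency matrix. Your additional remark that the algorithm's one-sided processing of ``downside'' edges never introduces an orientation is a sensible elaboration of the same idea rather than a different route.
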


\indent Another observation is the diagonal entries $A_{ii} = 0$ for all $i \in \{1,2,...,n\}$ since the \textbf{Algorithm} does not allow a self-loop. 

\begin{prop}
    Let $A_n \sim SW(n,k,p)$. Then the diagonal entries of $A_n$ are all zero. 
\end{prop}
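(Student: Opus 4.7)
The plan is to argue directly from the construction algorithm of $SW(n,k,p)$ that no self-loops are ever present in the graph, which by the definition of the adjacency matrix forces $A_{ii} = 0$ for every $i \in \{1,2,\dots,n\}$.

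First I would recall that the initial regular ring lattice has edges only of the form $\{i, i \pm d\}$ for $d \in \{1,2,\dots,k/2\}$ (mod $n$); in particular, since $d \geq 1$, no edge of the form $\{i,i\}$ is present at the start. Hence the diagonal of the adjacency matrix of the starting ring lattice is identically zero. Next I would examine the rewiring step: when an edge $\{i,j\}$ is rewired, the new endpoint $j'$ is drawn uniformly from the set
\[
\{1,2,\dots,n\} \setminus \bigl(\{i - k/2, \dots, i-1, i, i+1, \dots, i+k/2\} \cup \mathbf{N}(i)\bigr).
\]
Since $i$ is explicitly excluded from this set, $j' \neq i$, so the replacement edge $\{i,j'\}$ is never a self-loop either.

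Therefore, at every stage of the algorithm, the graph contains no edge of the form $\{i,i\}$. By the definition of the adjacency matrix, $A_{ii} = 1$ would require the edge $\{i,i\}$ to be present, which never happens, so $A_{ii} = 0$ for all $i$.

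There is no real obstacle here: the statement is essentially a restatement of the self-loop exclusion built into the construction. The only thing to be careful about is to verify both that the base ring lattice contributes no diagonal entry and that the rewiring exclusion set explicitly contains $i$; both are immediate from the definitions given in Section~\ref{d1}.
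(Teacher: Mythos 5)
Your proposal is correct and matches the paper's own justification, which simply observes that the diagonal entries vanish because the \textbf{Algorithm} never permits a self-loop; you spell out the two ingredients (the initial ring lattice only has edges $\{i, i\pm d\}$ with $d \ge 1$, and the rewiring candidate set explicitly excludes $i$) that the paper leaves implicit. No issues.
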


\indent Based on \textbf{Algorithm}, we know that each vertex $i \in \{1,2,...,n\}$ can connect to exactly k other neighbors for a regular ring lattice. By Definition $2.1$, the sum of all entries of the adjacency matrix $A_n$ is $nk$. Since $A_n$ is symmetric, each entries $A_{ij}$ will be counted twice with $A_{ji}$. Thus, the total number of edges in the graph is half of the sum of all entries of $A_n$ $(\frac{nk}{2})$. After the rewiring process is done, the graph still has the same total number of edges because for every removal of an edge, an additional edge must be connected. 

\begin{prop}
    Let $A_n \sim SW(n,k,p)$, then the sum of all entries in $A_n$ is $nk$ and the number of all edges is $\frac{nk}{2}$.
\end{prop}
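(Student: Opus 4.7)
The plan is to prove both statements together by showing that the rewiring process in the \textbf{Algorithm} preserves the total number of edges, reducing the problem to a straightforward count on the initial regular ring lattice.

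First I would establish the claim for the initial regular ring lattice. By construction, every vertex $i$ has exactly $\frac{k}{2}$ upside neighbors and $\frac{k}{2}$ downside neighbors, hence degree exactly $k$. Writing $A_n^{(0)}$ for the adjacency matrix of this initial lattice, the sum of the $i$-th row equals $\deg(i) = k$, so summing over $i$ gives $\sum_{i,j} (A_n^{(0)})_{ij} = nk$. Since $A_n^{(0)}$ is symmetric with zero diagonal (by Propositions~2.2 and 2.3 applied before rewiring), each edge $\{i,j\}$ contributes exactly $2$ to this sum, so the number of edges is $\frac{nk}{2}$.

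Next I would show invariance under the rewiring step. At each iteration of the \textbf{Algorithm}, an edge $\{i,j\}$ is either kept (with probability $1-p$), in which case nothing changes, or replaced by a new edge $\{i,j'\}$ where $j'$ is chosen from the complement of the current neighborhood of $i$. In the latter case, one entry pair $(A_{ij}, A_{ji})$ flips from $1$ to $0$ while the pair $(A_{ij'}, A_{j'i})$ flips from $0$ to $1$, so the total sum of entries is unchanged and exactly one edge is replaced by exactly one new edge. By induction on the number of considered edges (which is $\frac{nk}{2}$ since each edge is considered once), the final adjacency matrix $A_n$ satisfies $\sum_{i,j}(A_n)_{ij} = nk$. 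Dividing by $2$ (using symmetry and vanishing diagonal from the previously proved propositions) yields $\frac{nk}{2}$ edges.

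There is no substantial obstacle here; the argument is essentially bookkeeping. The only subtle point worth stating carefully is that the rewiring's constraint $j' \notin \{i-\tfrac{k}{2},\ldots,i+\tfrac{k}{2}\} \cup \mathbf{N}(i)$ guarantees $j'$ is a genuinely new neighbor of $i$, so the edge is truly swapped one-for-one rather than lost (no self-loop) or merged with an existing edge (no duplication). This is precisely what prevents the entry sum from decreasing during rewiring.
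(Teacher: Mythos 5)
Your proof is correct and follows essentially the same route as the paper: count degrees on the initial regular ring lattice to get entry sum $nk$ and hence $\frac{nk}{2}$ edges, then observe that each rewiring step replaces exactly one edge by one new edge, preserving both quantities. Your version is simply more explicit (the induction over rewiring steps and the remark that the no-self-loop/no-duplication constraint guarantees a genuine one-for-one swap), which the paper leaves implicit.
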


\section{The Method of Moments}
This section discusses the main method that we bring to study the eigenvalue distribution of the small-world random graph. In \cite{11}, Tao states the importance of the method of moments to prove the behavior and characteristics of the eigenvalue distribution. 
He also provides a formula to compute a general $l^{th}$ moment for $l \in \mathbb{N}$ as a starting point to study eigenvalues. Since we work on the case when the matrix is symmetric (see Proposition 2.3), so all eigenvalues are real numbers. \\
\noindent The notation Tr(M) means the trace of the square matrix $M$. \\
\indent Let $A_n$ be the adjacency matrix of the random graph. Let $\lambda_1,\lambda_2,...,\lambda_n \in \mathbb{R}$ be all eigenvalues of $A_n$. By the matrix identity in linear algebra, for any $l \in \mathbb{N}$, we have the equation
\begin{equation}
\mathrm{Tr}(A_n^l) = \sum_{i=1}^n \lambda_i^l.
\end{equation}
Since we need to study $\sum_{i=1}^n \lambda_i^l$ scaling by $n$, hence it follows that
\begin{equation}
\frac{1}{n}\mathrm{Tr}(A_n^l) = \frac{1}{n}\sum_{i=1}^n \lambda_i^l.
\end{equation}
We take the expectation to the above equation, and we have
\begin{equation}
\mathbb{E}[\frac{1}{n}\mathrm{Tr}(A_n^l)] = \mathbb{E}[\frac{1}{n}\sum_{i=1}^n \lambda_i^l].
\end{equation}
By Tao's equation (2.70) in \cite{11}, we have
\begin{equation}
\mathbb{E}[\frac{1}{n}\mathrm{Tr}(A_n^l)] = \frac{1}{n}\mathbb{E}[\mathrm{Tr}(A_n^l)] = \frac{1}{n}\sum_{1 \le i_1,...,i_l \le n} \mathbb{E}[A_{i_1i_2}A_{i_2i_3}A_{i_3i_4}...A_{i_li_1}],
\end{equation}
which is the sum over the expectation of the cycles of entries multiplication of length $l$, and scaling by $n$.
The following formulas are the first three moments of the eigenvalue distribution of the adjacency matrix  $A_n$ that will be used later in the paper. \\
The first moment of the adjacency matrix $A_n$ is
\begin{equation}
    \mathbb{E}[\frac{1}{n}\mathrm{Tr}(A_n)] = \frac{1}{n}\sum_{i=1}^n \mathbb{E}[A_{ii}].
\end{equation}
The second moment of the adjacency matrix $A_n$ is
\begin{equation}
    \mathbb{E}[\frac{1}{n}\mathrm{Tr}(A_n^2)] = \frac{1}{n}\sum_{1 \le i_1,i_2 \le n} \mathbb{E}[A_{i_1i_2}A_{i_2i_1}].
\end{equation}
The Third moment of the adjacency matrix $A_n$ is
\begin{equation}
    \mathbb{E}[\frac{1}{n}\mathrm{Tr}(A_n^3)] = \frac{1}{n}\sum_{1 \le i_1,i_2,i_3 \le n} \mathbb{E}[A_{i_1i_2}A_{i_2i_3}A_{i_3i_1}].
\end{equation}

\indent Let $A_n \sim SW(n,k,p)$ with fixed values  $n \in \mathbb{N},k \in 2\mathbb{N}$, and $p \in [0,1]$. After we convert the adjacency matrix $A_n$ from the small-world random graph with given parameters $n,k,p$, we compute all real eigenvalues. Then we use all eigenvalues to plot the histogram of the eigenvalue density. Finally, we observe and investigate the behaviors and characteristics of a given distribution when we vary all three parameters $n,k$, and $p$. Each particular value of input $(n,k,p)$ gives different shape of the distribution. Results with more detailed pictures are available on \cite{7}. Those motivate us the following theorems.
\begin{thm}
Given $n \in \mathbb{N}$ is an arbitrary and $\frac{k}{2} \in \mathbb{N}$ and $p \in [0,1]$ are fixed. Let $A_n \sim SW(n,k,p)$ be the adjacency matrix that represents the small-world random graph. Then the first and second moments of the eigenvalue distribution are  
\begin{equation}
    \frac{1}{n} \mathrm{Tr}(A_n) = 0
\end{equation}
\begin{equation}
    \frac{1}{n} \mathrm{Tr}(A_n^2) = k.
\end{equation}
\end{thm}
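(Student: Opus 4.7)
The plan is to observe that both identities in fact hold \emph{deterministically} for every realization of $A_n \sim SW(n,k,p)$, so no expectation is needed and the proof reduces to invoking the three structural propositions from Section 2 (zero diagonal, symmetry, and the fact that the sum of all entries is $nk$) together with the trace formulas (3.5) and (3.6).

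For the first moment, I would start from (3.5), namely $\frac{1}{n}\mathrm{Tr}(A_n) = \frac{1}{n}\sum_{i=1}^n A_{ii}$. Since the Algorithm forbids self-loops, Proposition 2.4 gives $A_{ii}=0$ for every $i$, so the sum is identically zero. This is a one-line argument; there is no obstacle here.

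For the second moment, I would begin from (3.6): $\frac{1}{n}\mathrm{Tr}(A_n^2) = \frac{1}{n}\sum_{1\le i_1,i_2\le n} A_{i_1 i_2}A_{i_2 i_1}$. By Proposition 2.3, $A_n$ is symmetric, so $A_{i_1 i_2}A_{i_2 i_1} = A_{i_1 i_2}^2$. Since entries of $A_n$ are either $0$ or $1$ (Definition 2.2), $A_{i_1 i_2}^2 = A_{i_1 i_2}$. Therefore the double sum collapses to the sum of all entries of $A_n$, which by Proposition 2.5 equals $nk$. Dividing by $n$ yields $k$.

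The main (and only minor) subtlety to flag is why the count $nk$ is preserved under rewiring: each time an edge $\{i,j\}$ is deleted it is immediately replaced by a new edge $\{i,j'\}$, so the degree sum, and hence the sum of entries of $A_n$, is invariant throughout the Algorithm. Once this invariance is noted, both moments are exact and nonrandom, and the theorem follows without any asymptotic or probabilistic argument.
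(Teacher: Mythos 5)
Your proposal is correct and follows essentially the same route as the paper: the first moment vanishes by the zero-diagonal property (Proposition 2.4), and the second moment reduces, via symmetry and the $0/1$-valued entries, to the total entry sum $nk$ from Proposition 2.5. Your observation that both identities are deterministic (no expectation required) matches the paper's treatment, which likewise states and proves them without taking expectations.
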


\begin{thm}
Given $n \in \mathbb{N}$ is an arbitrary and $\frac{k}{2} \in \mathbb{N}$ and $p \in [0,1]$ are fixed. Let $A_n \sim SW(n,k,p)$ be the adjacency matrix that represents the small-world random graph. Then, the limiting third moment of the eigenvalue distribution is 
\begin{equation}
    \lim_{n \to \infty} \mathbb{E}[\frac{1}{n} \mathrm{Tr}(A_n^3)] = \frac{3k(k-2)(1-p)^3}{4}.
\end{equation}
\end{thm}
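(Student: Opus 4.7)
My approach starts from formula~(3.7): rewrite $\mathbb{E}[A_{i_1i_2}A_{i_2i_3}A_{i_3i_1}]$ as the probability that the ordered triple $(i_1,i_2,i_3)$ is a closed walk of length three in $\mathbf{G}$. Because $A_{ii}=0$, only triples of three distinct vertices contribute, and each unordered triangle of $\mathbf{G}$ arises from exactly $3!=6$ such ordered walks. Letting $T_n$ denote the number of (unordered) triangles in $\mathbf{G}$, the statement reduces to showing
\begin{equation*}
\lim_{n\to\infty}\frac{\mathbb{E}[T_n]}{n}=\frac{k(k-2)(1-p)^3}{8}.
\end{equation*}

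I plan to decompose $T_n=T_n^{\mathrm{orig}}+T_n^{\mathrm{new}}$, where $T_n^{\mathrm{orig}}$ counts triangles whose three edges all lie in the initial ring lattice and $T_n^{\mathrm{new}}$ counts the rest. First I count triangles in the lattice by fixing a vertex and classifying the other two by side (right--right, left--left, or one on each side with offset-sum at most $k/2$), which gives $3\binom{k/2}{2}$ triangles through each vertex and $\frac{nk(k-2)}{8}$ in total. The crucial probabilistic step is the equivalence \emph{an original edge $\{u,v\}$ lies in $\mathbf{G}$} iff \emph{the algorithm keeps it}: any rewiring of some edge $\{i,j\}$ producing $\{u,v\}$ would need $i\in\{u,v\}$ and target $j'$ equal to the other endpoint, but since $|u-v|\le k/2$ that other endpoint automatically lies in the excluded set $\{i-k/2,\dots,i+k/2\}$, hence cannot be chosen. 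Because the keep/rewire decisions across lattice edges are independent Bernoulli$(1-p)$ trials, the three edges of a given lattice triangle survive independently with probability $1-p$, so $\mathbb{E}[T_n^{\mathrm{orig}}]=\frac{nk(k-2)(1-p)^3}{8}$ exactly.

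For $T_n^{\mathrm{new}}$ I will show $\mathbb{E}[T_n^{\mathrm{new}}]=O(1)$, which is negligible after dividing by $n$. The key estimate is that any fixed non-lattice pair $\{u,v\}$ becomes an edge of $\mathbf{G}$ with probability at most $O(k/n)$: the pair can only appear via one of the $O(k)$ downside rewirings at $u$ or $v$, each selecting $\{u,v\}$ with conditional probability at most $1/(n-2k-1)$. Classifying triangles in $T_n^{\mathrm{new}}$ by their number of non-lattice edges ($1$, $2$, or $3$) and multiplying the corresponding anchor counts ($O(nk^2)$ two-edge original paths, $O(n^2k)$ single-original-edge configurations, and $O(n^3)$ generic triples) by the respective probabilities $O(k/n)$, $O(k^2/n^2)$, $O(k^3/n^3)$ yields three contributions each of order $O(k^3)$, hence $O(1)$ since $k$ is fixed. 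Combining this with $T_n^{\mathrm{orig}}$, dividing by $n$, and multiplying by the factor $6$ from ordered versus unordered triples delivers the claimed limit $\frac{3k(k-2)(1-p)^3}{4}$.

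The step I expect to be the main obstacle is the \emph{iff}-equivalence between ``original edge present in $\mathbf{G}$'' and ``original edge kept'': it is what upgrades the survival probability of a lattice triangle from $(1-p)^3+O(1/n)$ to the \emph{exact} value $(1-p)^3$ and makes the three survival events jointly independent. A subordinate concern is that the target set $\{1,\dots,n\}\setminus(\{i-k/2,\dots,i+k/2\}\cup\mathbf{N}(i))$ has random size depending on the history of earlier rewirings, but the uniform lower bound $n-2k-1$ on its size handles all the $O(k/n)$ estimates without any delicate conditioning argument.
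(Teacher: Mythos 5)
Your overall strategy is the paper's own strategy in different clothing: your classification of triangles by the number of non-lattice edges ($0,1,2,3$) is exactly the paper's \textbf{all close} / \textbf{one far} / \textbf{two far} / \textbf{all far} decomposition (Notation 5.6 and Lemma 5.11), your lattice-triangle count $\tfrac{nk(k-2)}{8}$ times the ordered-walk factor $6$ reproduces the paper's $\mathbf{C_1}=\tfrac{3}{4}nk(k-2)$, your survival probability $(1-p)^3$ is the paper's $\mathbf{P_1}$ (Lemma 6.1), and your three error contributions of order $O(k^3)$ match the paper's $\mathbf{C_i}\mathbf{P_i}=O_k(1)$ for $i=2,3,4$. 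Your justification of the main term is in fact more explicit than the paper's: the observation that a rewired edge always leaves the $k$-neighborhood of its source, so a lattice edge is present \emph{iff} its own keep/rewire coin says keep, is what makes the three survival events exactly independent Bernoulli$(1-p)$, and the paper asserts this independence without spelling it out.

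There is, however, one genuine gap, and it sits precisely at the point you dismiss as a ``subordinate concern.'' The claimed \emph{deterministic} lower bound $n-2k-1$ on the size of the target set $\{1,\dots,n\}\setminus(\{i-\tfrac{k}{2},\dots,i+\tfrac{k}{2}\}\cup\mathbf{N}(i))$ is false. At the moment vertex $i$ is processed, $\mathbf{N}(i)$ contains not only (some of) its original $k$ lattice neighbors but also every earlier-processed vertex $j<i$ that rewired an edge onto $i$; there is no deterministic bound on how many such vertices there are, so $|\mathbf{N}(i)|$ is not bounded by $k$ and the target set can in principle be much smaller than $n-2k-1$. Since your $O(k/n)$ estimate for a fixed non-lattice pair (and hence every error term) rests on this lower bound, the argument as written does not close. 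The event that many vertices rewire into a single $i$ has super-polynomially small probability, so the estimate is morally correct, but you must condition on the number $l$ of rewired-in edges and bound the tail $\mathbb{P}(l>n/2)$ separately --- this is exactly what the paper's Lemmas 6.3, 6.4, and 6.8 are for. A second, smaller issue of the same flavor: in the two- and three-non-lattice-edge cases you multiply the per-edge probabilities $O(k/n)$, but when both far edges are created by rewirings at the \emph{same} vertex the two events are not independent (the second target is drawn from a set that excludes the first); the product bound $O(k^2/n^2)$ is still of the right order, but it needs the direct joint computation of the paper's Lemmas 6.9--6.10 rather than an appeal to independence.
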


\section{The First and Second Moments}
Let $A_n \sim SW(n,k,p)$ be the adjacency matrix represents the Watts-Strogatz random graph.
We begin with the proof of the first moment (3.9) in Theorem 3.8.
\begin{proof} By (3.5), the algebraic formula of the first moment, which is the trace of $A_n$ scaled by $n$, is equivalent to the sum of diagonal entries of the matrix $A_n$ scaled by $n$. Hence,
\begin{eqnarray}
\frac{1}{n} \mathrm{Tr}(A_n) & = & \frac{1}{n} \sum_{1 \le i \le n} A_{ii}  \\ 
& = & \frac{1}{n}(0) \\ 
&=& 0
\end{eqnarray}
In (4.2), it holds by Proposition 2.4. Therefore, it proves that $\frac{1}{n} \mathrm{Tr}(A_n) = 0$. 
\end{proof}
Next, we prove the second moment (3.10) in Theorem 3.8.
\begin{proof} We use $(3.6)$ formula to compute the second moment. That is,
\begin{eqnarray}
    \frac{1}{n} \mathrm{Tr}(A_n^2) = \frac{1}{n} \sum_{1 \le i_1,i_2 \le n} A_{i_1i_2}A_{i_2i_1} \\
    = \frac{1}{n} \sum_{1 \le i_1,i_2 \le n} A_{i_1i_2}A_{i_1i_2},
\end{eqnarray}
where the last equality holds by Proposition 2.3. Then,
\begin{eqnarray}
\frac{1}{n} \mathrm{Tr}(A_n^2)
&=& \frac{1}{n} \sum_{1 \le i_1,i_2 \le n} A_{i_1i_2}^2 \\
&=& \frac{1}{n}\sum_{1 \le i_1,i_2 \le n} A_{i_1i_2}  
\end{eqnarray}
\noindent In (4.7), the result holds since the entries of $A_n$ are either 1 or 0. In addition, we can observe in $(4.4)$ that $A_{i_1i_2} = A_{i_2i_1}$ since 
the edges $\{i_1,i_2\}$ and $\{i_2,i_1\}$ are the same.
Next, for $1 \le i_1,i_2 \le n$, we have $$\frac{1}{n} \mathrm{Tr}(A_n^2) = \frac{1}{n}\sum_{i_1} \sum_{i_2} A_{i_1i_2}
= \frac{1}{n}\sum_{i_1}(A_{i_1 1} +...+A_{i_1 n}).$$ 
By Propositions 2.3 and 2.5, the sum of all entries of $A_n$ is $nk$. Therefore,
$$\frac{1}{n} \mathrm{Tr}(A_n^2) = \frac{1}{n}(2)(\frac{nk}{2}) = \frac{1}{n}(nk) \\ = k.$$
\end{proof}

\section{The Third Moment Formula}
This section generalizes the formula of the third moment of the eigenvalue distribution. 
Let $A_n \sim SW(n,k,p)$ be the adjacency matrix represents the Watts-Strogatz random graph.
\begin{lem}  $$\mathbb{E}[\frac{1}{n} \mathrm{Tr}(A_n^3)] = \frac{1}{n}\sum_{\substack{1 \le i_1,i_2,i_3 \le n }} \mathbb{E}[A_{i_1i_2}A_{i_2i_3}A_{i_3i_1}], \mathrm{where \ }i_1,i_2,i_3 \mathrm{\ distinct}.$$
\end{lem}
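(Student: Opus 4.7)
The plan is to start from the general trace formula (3.7) and then argue that all terms in the sum where the indices fail to be pairwise distinct contribute zero, so that the sum can be restricted to distinct triples without changing its value.

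First I would write
\begin{equation*}
\mathbb{E}\!\left[\frac{1}{n}\mathrm{Tr}(A_n^3)\right] = \frac{1}{n}\sum_{1 \le i_1,i_2,i_3 \le n} \mathbb{E}[A_{i_1i_2}A_{i_2i_3}A_{i_3i_1}],
\end{equation*}
which is exactly (3.7). Then I would split the sum into two parts: those indexed by triples $(i_1,i_2,i_3)$ where at least two of the indices coincide, and those where $i_1,i_2,i_3$ are pairwise distinct. The goal is to show that the first part vanishes.

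Next I would go through the three possible coincidences case by case. If $i_1=i_2$, then the factor $A_{i_1 i_2}$ equals the diagonal entry $A_{i_1 i_1}$, which is $0$ by Proposition~2.4, so the product $A_{i_1i_2}A_{i_2i_3}A_{i_3i_1}$ is identically zero and its expectation is zero. The same reasoning applies if $i_2=i_3$ (then $A_{i_2i_3}=0$) or if $i_3=i_1$ (then $A_{i_3i_1}=0$). The case $i_1=i_2=i_3$ is subsumed by any of the above. Consequently every term in the degenerate part of the sum is $0$, and the full sum reduces to the sum restricted to pairwise distinct indices, which is precisely the stated formula.

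I do not anticipate a real obstacle here. The only thing to be careful about is handling the case analysis cleanly, i.e.\ making explicit that \emph{any} coincidence among $i_1,i_2,i_3$ forces at least one of the three factors $A_{i_1i_2}$, $A_{i_2i_3}$, $A_{i_3i_1}$ to be a diagonal entry, and then invoking Proposition~2.4 to kill that term deterministically (not just in expectation). Once that is spelled out, the lemma follows immediately.
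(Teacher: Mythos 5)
Your proposal is correct and follows essentially the same route as the paper: start from (3.7), split off the terms with coincident indices, and kill each such term because a coincidence forces one factor to be a diagonal entry, which vanishes by Proposition~2.4. Your observation that any coincidence already makes the product identically zero is in fact a slightly cleaner shortcut than the paper's explicit five-case reduction, but it is the same argument in substance.
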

\begin{proof}
By (3.7), we have
\begin{eqnarray}
\mathbb{E}[\frac{1}{n} \mathrm{Tr}(A_n^3)] &=& \mathbb{E}[\frac{1}{n}\sum_{1 \le i_1,i_2,i_3 \le n}A_{i_1i_2}A_{i_2i_3}A_{i_3i_1}] \\ &=& \frac{1}{n}\sum_{1 \le i_1,i_2,i_3 \le n} \mathbb{E}[A_{i_1i_2}A_{i_2i_3}A_{i_3i_1}] 
\end{eqnarray}
\noindent We use Proposition 2.3 and the same reasoning in (4.7) about each entry of $A_n$ are either $0$ or $1$ to simplify five possible cases of index values $i_1,i_2,i_3$ from (5.3). \\
1. For $i_1=i_2 \ne i_3$,
\begin{align*}
\mathbb{E}[A_{i_1i_2}A_{i_2i_3}A_{i_3i_1}] &= \mathbb{E}[A_{i_1i_1}A_{i_1i_3}A_{i_3i_1}] = \mathbb{E}[A_{i_1i_1}A_{i_1i_3}A_{i_1i_3}] = \mathbb{E}[A_{i_1i_1}A_{i_1i_3}^2] \\ &= \mathbb{E}[A_{i_1i_1}A_{i_1i_3}].
\end{align*}
2. For $i_1 \ne i_2=i_3$,
\begin{align*}
\mathbb{E}[A_{i_1i_2}A_{i_2i_3}A_{i_3i_1}] &= \mathbb{E}[A_{i_1i_2}A_{i_2i_2}A_{i_2i_1}] = \mathbb{E}[A_{i_1i_2}A_{i_2i_2}A_{i_1i_2}] = \mathbb{E}[A_{i_1i_2}^2A_{i_2i_2}] \\ &= \mathbb{E}[A_{i_1i_2}A_{i_2i_2}]. 
\end{align*}
3. For $i_1=i_3 \ne i_2$,
\begin{align*}
\mathbb{E}[A_{i_1i_2}A_{i_2i_3}A_{i_3i_1}] &= \mathbb{E}[A_{i_1i_2}A_{i_2i_1}A_{i_1i_1}] = \mathbb{E}[A_{i_1i_2}A_{i_1i_2}A_{i_1i_1}] = \mathbb{E}[A_{i_1i_2}^2A_{i_1i_1}] \\ &= \mathbb{E}[A_{i_1i_2}A_{i_1i_1}].
\end{align*}
4. For $i_1=i_2=i_3$,  $\mathbb{E}[A_{i_1i_2}A_{i_2i_3}A_{i_3i_1}] = \mathbb{E}[A_{i_1i_1}A_{i_1i_1}A_{i_1i_1}] = \mathbb{E}[A_{i_1i_1}^3] = \mathbb{E}[A_{i_1i_1}]$. \\
5. For $i_1,i_2,i_3$ are distinct, we keep the same formula, which is $\mathbb{E}[A_{i_1i_2}A_{i_2i_3}A_{i_3i_1}]$. 
\newline \noindent Then the sum in equation (5.3) is factored into five different sums based on the five different conditions of index values $i_1,i_2,i_3$. Hence,
\begin{align*}
    \mathbb{E}[\frac{1}{n} \mathrm{Tr}(A_n^3)] &= \frac{1}{n}\Bigg(\sum_{1 \le i_1,i_3 \le n}\mathbb{E}[A_{i_1i_1}A_{i_1i_3}] + \sum_{1 \le i_1,i_2 \le n}\mathbb{E}[A_{i_1i_2}A_{i_2i_2}] \\ &+ \sum_{1 \le i_1,i_2 \le n}\mathbb{E}[A_{i_1i_2}A_{i_1i_1}]  + \sum_{1\le i_1 \le n}\mathbb{E}[A_{i_1i_1}] \\ &+ \sum_{\substack{1 \le i_1,i_2,i_3 \le n \mathrm{\ distinct}}}  \mathbb{E}[A_{i_1i_2}A_{i_2i_3}A_{i_3i_1}]\Bigg) \\ &= \frac{1}{n}\Bigg(0+0+0+0+ \sum_{\substack{1 \le i_1,i_2,i_3 \le n \mathrm{\ distinct}}}  \mathbb{E}[A_{i_1i_2}A_{i_2i_3}A_{i_3i_1}]\Bigg) \\ &= \frac{1}{n}\sum_{\substack{1 \le i_1,i_2,i_3 \le n \mathrm{\ distinct}}}  \mathbb{E}[A_{i_1i_2}A_{i_2i_3}A_{i_3i_1}]
\end{align*}
The equation holds true because of the same reasoning in (4.2).
\end{proof}
\begin{lem} A generalized formula version of the third moment of the eigenvalue distribution of the small-world random graph is
$$\mathbb{E}[\frac{1}{n} \mathrm{Tr}(A_n^3)] = \frac{1}{n}\sum_{1 \le i_1,i_2,i_3 \le n \mathrm{\ distinct}}  \mathbb{P}(A_{i_1i_2} = 1, A_{i_2i_3} = 1, A_{i_3i_1} = 1).$$
\end{lem}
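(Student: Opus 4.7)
The plan is to apply Lemma 5.1 and then convert expectations of products of indicator random variables into probabilities of the corresponding joint events. Since every entry $A_{ij}$ of $A_n$ takes values only in $\{0,1\}$, each $A_{ij}$ is an indicator, and so is any product of such entries.

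First, I would invoke Lemma 5.1 directly to write
$$\mathbb{E}\!\left[\frac{1}{n} \mathrm{Tr}(A_n^3)\right] = \frac{1}{n}\sum_{\substack{1 \le i_1,i_2,i_3 \le n \\ i_1,i_2,i_3 \text{ distinct}}}  \mathbb{E}[A_{i_1i_2}A_{i_2i_3}A_{i_3i_1}].$$
Then, for any distinct $i_1,i_2,i_3$, I would observe that the product $A_{i_1i_2}A_{i_2i_3}A_{i_3i_1}$ is itself a $\{0,1\}$-valued random variable which equals $1$ exactly when all three of the factors equal $1$, i.e., it is the indicator of the event $\{A_{i_1i_2} = 1,\ A_{i_2i_3} = 1,\ A_{i_3i_1} = 1\}$. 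Taking expectations of an indicator gives the probability of the underlying event:
$$\mathbb{E}[A_{i_1i_2}A_{i_2i_3}A_{i_3i_1}] = \mathbb{P}(A_{i_1i_2} = 1,\ A_{i_2i_3} = 1,\ A_{i_3i_1} = 1).$$

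Substituting this identity termwise into the sum from Lemma 5.1 yields the claimed formula. There is no real obstacle here; the step is purely bookkeeping, relying on the Bernoulli nature of the entries of an adjacency matrix. The only mild subtlety worth flagging in the write-up is that we do \emph{not} need the three events to be independent: the identity $\mathbb{E}[\mathbf{1}_E] = \mathbb{P}(E)$ applied to $E = \{A_{i_1i_2}=1\}\cap\{A_{i_2i_3}=1\}\cap\{A_{i_3i_1}=1\}$ is what delivers the equality, so no product of marginal probabilities is being asserted at this stage.
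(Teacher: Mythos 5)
Your proposal is correct and follows essentially the same route as the paper: apply Lemma 5.1 and then use that each term $A_{i_1i_2}A_{i_2i_3}A_{i_3i_1}$ is a $\{0,1\}$-valued (Bernoulli) random variable, namely the indicator of the joint event, whose expectation is its probability. If anything, your remark that the identity $\mathbb{E}[\mathbf{1}_E]=\mathbb{P}(E)$ is applied to the intersection event --- so that no independence or product of marginals is being asserted --- states the key point more precisely than the paper's own wording.
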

\begin{proof}
Since the random variable $A_{ij}$ is either $0$ or $1$, it is the Bernoulli distribution. The expectation of the random variable is equal to the probability of the random variable itself. From Lemma 5.1, it follows that
\begin{align*}
    \mathbb{E}[\frac{1}{n} \mathrm{Tr}(A_n^3)] &=
    \frac{1}{n}\sum_{1 \le i_1,i_2,i_3 \le n \mathrm{\ distinct}}  \mathbb{E}[A_{i_1i_2}A_{i_2i_3}A_{i_3i_1}] \\ &=
    \frac{1}{n}\sum_{1 \le i_1,i_2,i_3 \le n  \mathrm{\ distinct}}  \mathbb{P}(A_{i_1i_2} = 1, A_{i_2i_3} = 1, A_{i_3i_1} = 1).
\end{align*}
\end{proof}

The indexes $i_1, i_2, i_3$ within the sum from Lemma 5.4 represent the three distinct vertices $i_1, i_2, i_3$ in $SW(n,k,p)$ random graph. We define new notations to easily understand a vertex relation within the random graph.
\begin{notn}
Given vertices $i,j \in \{1,2,...,n\}$ and $c \in \mathbb{N}$, we define the notation $||i-j|| = c$ is the distance on the torus such that the minimum distance on the circle between vertices $i$ and $j$ is equal to $c$, without considering the direction (upside or downside). In other words, the vertex $j$ is located $c$ vertices apart from the vertex $i$.   For example, we let $n=8$ and $k=4$ with a set of vertices $\{1,2,...,8\}$. Consider a ring lattice of $8$ vertices starting from the vertex $1$ to the vertex $8$, the notation $||i-j|| = 2$, for $i= \mathrm{vertex \ } 1,\ j= \mathrm{vertex \ }7$, means the minimum distance on torus between the vertex $1$ and the vertex $7$ is $2$ apart between two vertices. Alternatively, we can think about if starting from the vertex $1$, we need to jump two steps: first step from vertex $1$ to vertex $8$ and another step from vertex $8$ to reach vertex $7$.
\end{notn}

\begin{notn}
Based on Lemma 5.4, the main sum of the probability is required to have all distinct vertices $i_1,i_2,i_3$ and the cycle of edges $\{i_1,i_2\},\{i_2,i_3\}$ and $\{i_3,i_1\}$. There are four different cases of the vertex's location on the torus that we must recognize the construction of such connected edges. For distinct vertices $i_1,i_2,i_3$ in the graph,\\
\textbf{1.} $||i_1-i_2|| \le \frac{k}{2},||i_2-i_3|| \le \frac{k}{2},||i_3-i_1|| \le \frac{k}{2}$. Each edge is constructed by two vertices where the distance between them is within $\frac{k}{2}$ apart. In the remaining part of the paper, we will call this configuration \textbf{``all close."}\\
\textbf{2.} This case contains two close edges; each is constructed by two vertices where the distance between them is within $\frac{k}{2}$ apart from the other. However, the third edge has two vertices far from each other (the distance apart is more than $\frac{k}{2}$). For any vertex $i_1,i_2,i_3$, those are classified into this case if satisfying one of the possibilities: 
\begin{itemize}
    \item  $||i_1-i_2||>\frac{k}{2},||i_2-i_3||\le \frac{k}{2},||i_3-i_1|| \le \frac{k}{2}$
    \item  $||i_1-i_2||\le \frac{k}{2},||i_2-i_3||>\frac{k}{2},||i_3-i_1|| \le \frac{k}{2}$
    \item  $||i_1-i_2|| \le \frac{k}{2},||i_2-i_3|| \le \frac{k}{2},||i_3-i_1||>\frac{k}{2}$
\end{itemize}
This configuration is called \textbf{``one far."} \\
\textbf{3.} The third configuration is only one edge is constructed by two close vertices (the distance is within $\frac{k}{2}$ apart), while the other two edges have a far distance constructed vertices, where each edge is constructed by two vertices with more than $\frac{k}{2}$ distance apart. Likewise, it follows that
\begin{itemize}
    \item  $||i_1-i_2||>\frac{k}{2},||i_2-i_3||>\frac{k}{2},||i_3-i_1|| \le \frac{k}{2}$
    \item  $||i_1-i_2|| \le \frac{k}{2},||i_2-i_3||>\frac{k}{2},||i_3-i_1||>\frac{k}{2}$
    \item  $||i_1-i_2||>\frac{k}{2},||i_2-i_3||\le \frac{k}{2},||i_3-i_1||>\frac{k}{2}$
\end{itemize}
This configuration is called \textbf{``two far."} \\
\textbf{4.} All three edges are constructed by vertices, where each pair of vertices has the distance more than $\frac{k}{2}$ apart. It follows that $||i_1-i_2||>\frac{k}{2},||i_2-i_3||>\frac{k}{2},||i_3-i_1||>\frac{k}{2}$. This configuration is called \textbf{``all far."}
\end{notn}
\begin{notn}
Let $\mathbf{P_1} =  \mathbb{P}(A_{i_1i_2} = 1, A_{i_2i_3} = 1, A_{i_3i_1} = 1)$ when the vertices $i_1,i_2,i_3$ satisfy \textbf{all close }configuration.
$\mathbf{P_2} =  \max\{\mathbb{P}(A_{i_1i_2} = 1, A_{i_2i_3} = 1, A_{i_3i_1} = 1)\}$ when the vertices $i_1,i_2,i_3$ satisfy \textbf{one far }configuration.  
$\mathbf{P_3} =  \max\{\mathbb{P}(A_{i_1i_2} = 1, A_{i_2i_3} = 1, A_{i_3i_1} = 1)\}$ when the vertices $i_1,i_2,i_3$ satisfy \textbf{two far }configuration. 
$\mathbf{P_4} =  \max\{\mathbb{P}(A_{i_1i_2} = 1, A_{i_2i_3} = 1, A_{i_3i_1} = 1)\}$ when the vertices $i_1,i_2,i_3$ satisfy \textbf{all far }configuration.
\end{notn}

\begin{notn}
Let $\mathbf{C_1}$ is defined to be the cardinality of the set $\{(i_1,i_2,i_3): 1 \le i_1,i_2,i_3 \le n \mathrm{\ distinct \ } \mathrm{and \ } \textbf{all close} \mathrm{ \ configuration}\}$, $\mathbf{C_2}$ is defined to be the cardinality of the set $\{(i_1,i_2,i_3): 1 \le i_1,i_2,i_3 \le n \mathrm{\ distinct \ } \mathrm{and \ } \textbf{one far} \mathrm{ \ configuration}\}$,
$\mathbf{C_3}$ is defined to be the cardinality of the set $\{(i_1,i_2,i_3): 1 \le i_1,i_2,i_3 \le n \mathrm{\ distinct \ } \mathrm{and \ }\\ \textbf{two far} \mathrm{ \ configuration}\}$, $\mathbf{C_4}$ is defined to be the cardinality of the set $\{(i_1,i_2,i_3): 1 \le i_1,i_2,i_3 \le n \mathrm{\ distinct \ } \mathrm{and \ } \textbf{all far} \mathrm{ \ configuration}\}$.
\end{notn}

\begin{notn}
the vertex $i \pm d$ for any $d$ to represent $i \pm d \mathrm{\ (mod \ n)}$.
\end{notn}
\begin{notn}
For any vertex $i,j$ in $SW(n,k,p)$ random graph. We define $i \to j$ is the rewiring from vertex $i$ to vertex $j$. It means that after removing an edge $\{i,i+d\}$ for a particular $d \in \{1,2,...,\frac{k}{2}\}$ with the probability $p$, an edge $\{i,j\}$ is then connected, for some vertex $j$ from randomly choosing from a vertex set $\{1,2,...,n\} \backslash (\{i-\frac{k}{2},...,i-1,i,i+1,...,i+\frac{k}{2}\} \cup \mathbf{N}(i))$. \\ \indent Also, we define $i \xrightarrow{d}  j$ for a specific $d \in \{1,2,...,\frac{k}{2}\}$ is the edge $\{i,i+d\}$ gets rewired to a new edge $\{i,j\}$. In other words, it means, with the probability $p$, the edge $\{i,i+d\}$ gets rewired and be replaced by the edge $\{i,j\}$.
\end{notn}

\begin{lem}
By Notations 5.5-5.10, we have another new generalized version the third moment formula 
    $$\mathbb{E}[\frac{1}{n} \mathrm{Tr}(A_n^3)] = \frac{1}{n}\big[\mathbf{C_1}\mathbf{P_1} + O(\mathbf{C_2}\mathbf{P_2}) + O(\mathbf{C_3}\mathbf{P_3}) + O(\mathbf{C_4}\mathbf{P_4})\big].$$
\end{lem}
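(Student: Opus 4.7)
The plan is to start from Lemma 5.4 and partition the sum over all distinct triples $(i_1,i_2,i_3)$ into the four mutually exclusive and exhaustive configurations introduced in Notation 5.6 (all close, one far, two far, all far). Since every distinct triple falls into exactly one configuration class, we can write
$$\mathbb{E}\bigl[\tfrac{1}{n}\mathrm{Tr}(A_n^3)\bigr] = \frac{1}{n}\sum_{\text{config } j=1}^{4}\;\sum_{(i_1,i_2,i_3)\in \text{config } j} \mathbb{P}(A_{i_1i_2}=1,A_{i_2i_3}=1,A_{i_3i_1}=1).$$

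For the all close configuration, I would argue by the vertex-transitive symmetry of the ring lattice (and the uniformity of the rewiring rule) that the joint probability $\mathbb{P}(A_{i_1i_2}=1,A_{i_2i_3}=1,A_{i_3i_1}=1)$ depends only on the multiset of pairwise torus distances, and that in the all close regime this probability takes the single common value $\mathbf{P_1}$ defined in Notation 5.7 (essentially because each of the three edges is an original ring-lattice edge and survives if and only if it is not rewired in the algorithm). Consequently the inner sum over this class is exactly $\mathbf{C_1}\mathbf{P_1}$, giving the leading main term.

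For each of the remaining three classes, the joint probability can vary with the specific triple, so rather than a clean equality I would use the trivial upper bound: every summand is at most $\mathbf{P_j}$, the maximum probability over that configuration class (again as defined in Notation 5.7). This yields
$$\sum_{(i_1,i_2,i_3)\in \text{config }j} \mathbb{P}(A_{i_1i_2}=1,A_{i_2i_3}=1,A_{i_3i_1}=1) \le \mathbf{C_j}\mathbf{P_j}, \qquad j=2,3,4,$$
which is precisely an $O(\mathbf{C_j}\mathbf{P_j})$ contribution. Adding the four pieces together produces the claimed expression.

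The step I expect to require the most care is justifying that $\mathbf{P_1}$ is well-defined as a single value (rather than only as a maximum, as for $\mathbf{P_2},\mathbf{P_3},\mathbf{P_4}$). This rests on the rotational symmetry of the ring lattice together with the fact that every all close triple consists of three original lattice edges whose joint survival probability under the independent rewiring decisions can be computed without ever invoking a rewiring destination. The rest of the argument is a straightforward case split and a uniform bound, so no further delicate computation is needed at this stage.
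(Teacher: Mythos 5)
Your proposal is correct and follows essentially the same route as the paper: split the distinct-triple sum from Lemma 5.4 into the four configuration classes, observe that every \textbf{all close} triple has the same probability $\mathbf{P_1}$ so that class contributes exactly $\mathbf{C_1}\mathbf{P_1}$, and bound each remaining class by its cardinality times the maximum probability, yielding the $O(\mathbf{C_j}\mathbf{P_j})$ terms. No substantive difference from the paper's argument.
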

\begin{proof}
Without the loss of generality, we consider the bound of all probabilities for each configuration. The \textbf{all close} configuration contains exactly one case when the distance between each pair of two vertices is within $\frac{k}{2}$ apart from each other. The permutation of vertices $i_1,i_2,i_3$ in this configuration gives the same probability $\mathbf{P_1}$ and $\mathbf{C_1}$. However, the permutations for other configurations give different probabilities. We must bound the probabilities for each configuration with the maximum of the probabilities of the vertex permutation in a particular configuration $\mathbf{P_i}$, for $i=2,3,4$. For each configuration, we compute the sum of all probabilities of all vertex permutation by using the bound of the product of the maximum probability $\mathbf{P_i}$ and the number of all permutations $\mathbf{C_i}$. It follows that
\begin{align*}
    \sum_{\mathrm{i \ configuration}}\mathbb{P}(A_{i_1i_2} = 1, A_{i_2i_3} = 1, A_{i_3i_1} = 1) &\le O(\mathbf{C_i} \cdot \mathbf{P_i}),
\end{align*}
where $i=2$ configuration means \textbf{one far} configuration, $i=3$ configuration means \textbf{two far} configuration, and $i=4$ configuration means \textbf{all far} configuration. Thus, by Lemma $5.4$
\begin{align*}
    \mathbb{E}[\frac{1}{n} \mathrm{Tr}(A_n^3)] &= \frac{1}{n}\sum_{1 \le i_1,i_2,i_3 \le n  \mathrm{\ distinct}}  \mathbb{P}(A_{i_1i_2} = 1, A_{i_2i_3} = 1, A_{i_3i_1} = 1) \\ &= \frac{1}{n}\bigg(\sum_{\mathrm{all \ close}} \mathbf{P_1} + \sum_{\mathrm{one \ far}}\mathbb{P}(A_{i_1i_2} = 1, A_{i_2i_3} = 1, A_{i_3i_1} = 1) \\ &+ \sum_{\mathrm{two \ far}}\mathbb{P}(A_{i_1i_2} = 1, A_{i_2i_3} = 1, A_{i_3i_1} = 1) \\ &+ \sum_{\mathrm{all \ far}}\mathbb{P}(A_{i_1i_2} = 1, A_{i_2i_3} = 1, A_{i_3i_1} = 1)\bigg) \\ &= \frac{1}{n}\big[\mathbf{C_1}\mathbf{P_1} + O(\mathbf{C_2}\mathbf{P_2}) + O(\mathbf{C_3}\mathbf{P_3}) + O(\mathbf{C_4}\mathbf{P_4})\big].
\end{align*}
\end{proof}

\section{The Computation of Probabilities}
This section provides the computation of the probabilities $\mathbf{P_1}, \mathbf{P_2}, \mathbf{P_3}$, and $\mathbf{P_4}$. In general, since all permutation of three vertices can rearrange to have a new order of vertices $i_1 < i_2 < i_3$, we will consider only the case that all vertices $i_1,i_2,i_3$ are located orderly in the random graph. Each configuration contains at least one condition. When we assign three vertices $i_1,i_2,i_3$, these will satisfy one of the conditions in four configurations. \\
\indent
Let $i_1,i_2,i_3$ be vertices on the $SW(n,k,p)$ random graph. These vertices are classified as \textbf{all close} configuration. The construction of this configuration follows that
\begin{itemize}
    \item Starting at vertex $i_1$, we need to connect an edge $\{i_1,i_2\}$ such that $||i_1-i_2|| \le \frac{k}{2}$. This edge $(A_{i_1i_2} = 1)$ already exists without the rewiring. So, we keep this edge non-rewiring with the probability $\mathbb{P}(A_{i_1i_2} = 1)= 1-p$.
    \item Then we recognize at vertex $i_2$ and consider an edge $\{i_2,i_3\}$ such that the distance $||i_2-i_3|| \le \frac{k}{2}$. The event $A_{i_2i_3} = 1$ happens if the edge does not rewire. So, we have $\mathbb{P}(A_{i_2i_3}=1)= 1-p$.
    \item Finally, from vertex $i_3$ there is an edge $\{i_3,i_1\}$ with $||i_3-i_1|| \le \frac{k}{2}$ to connect to $i_1$ again. The probability to have this edge is equal to $\mathbb{P}(A_{i_3i_1}=1)= 1-p$.
\end{itemize}
\begin{lem}
    For distinct vertices $i_1,i_2,i_3$ on the $SW(n,k,p)$ random graph such that those vertices satisfy the case \textbf{all close} configuration. The probability
    $\mathbf{P_1} =\mathbb{P}(A_{i_1i_2} = 1, A_{i_2i_3} = 1, A_{i_3i_1} = 1) = (1-p)^3.$
\end{lem}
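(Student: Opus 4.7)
The plan is to read off $\mathbf{P_1}$ directly from the rewiring algorithm, using two structural facts about the \textbf{all close} configuration. First I would note that in the initial regular ring lattice each of the three pairs $\{i_1,i_2\},\{i_2,i_3\},\{i_3,i_1\}$ is already an edge, since by hypothesis every pairwise torus distance is at most $\tfrac{k}{2}$. So at the start of the algorithm all three edges are present and contribute $1$ to the corresponding entries of $A_n$.

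Next I would argue that a close edge can only appear in the final graph in one way, namely by the original lattice edge surviving the rewiring coin flip. The rewiring step samples a new endpoint $j'$ from $\{1,\dots,n\}\setminus(\{i-\tfrac{k}{2},\dots,i+\tfrac{k}{2}\}\cup \mathbf{N}(i))$, so any rewired edge incident to $i$ lands \emph{outside} the $\tfrac{k}{2}$-neighborhood of $i$. Hence for any close pair $\{u,v\}$ we have $A_{uv}=1$ if and only if the lattice edge $\{u,v\}$ was not rewired when it was examined.

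The final step is to invoke independence: each undirected lattice edge is examined exactly once, as a downside edge from exactly one of its endpoints, and the keep/rewire flips are performed independently across edges with keep-probability $1-p$. Thus the three events $\{A_{i_1i_2}=1\}$, $\{A_{i_2i_3}=1\}$, $\{A_{i_3i_1}=1\}$ are three independent Bernoulli$(1-p)$ trials, and multiplying gives $\mathbf{P_1}=(1-p)^3$.

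The main obstacle, and the point I would take the most care with, is justifying that the three survival events are genuinely independent: I would have to observe (i) that the three pairs are distinct lattice edges, each with its own coin flip, no pair getting flipped twice from its two different endpoints, and (ii) that the exclusion set $\mathbf{N}(i)$ appearing in the rewiring rule does not secretly couple the survival events, because the survival of a close edge depends only on that edge's own keep/rewire flip, not on the random target chosen when some \emph{other} edge rewires.
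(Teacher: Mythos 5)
Your proposal is correct and follows essentially the same route as the paper: each close pair is a lattice edge that survives exactly when its single keep/rewire coin flip says ``keep'' (probability $1-p$), and the three flips are independent, giving $(1-p)^3$. Your extra observations --- that a rewired edge always lands outside the $\tfrac{k}{2}$-neighborhood, so a close edge can \emph{only} arise from the original lattice edge, and that each undirected edge is flipped exactly once --- are worthwhile justifications that the paper leaves implicit.
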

\begin{proof}
Let the vertices $i_1,i_2,i_3$ be distinct vertices. We need to find the probability that $\{i_1,i_2\},\{i_2,i_3\}$, and  $\{i_3,i_1\}$ are in the random graph. Based on above computation of the probability for the connection of three edges and the independent events of $A_{i_1i_2} = 1,A_{i_2i_3}$ = 1, and $A_{i_3i_1} = 1$ to keep each edge does not rewire, therefore,
$\mathbf{P_1} =\mathbb{P}(A_{i_1i_2} = 1, A_{i_2i_3} = 1, A_{i_3i_1} = 1) = \mathbb{P}(A_{i_1i_2} = 1) \cdot \mathbb{P}(A_{i_2i_3} = 1) \cdot \mathbb{P}( A_{i_3i_1} = 1) = (1-p)^3.$
\end{proof}

\indent  Next, we mainly demonstrates the proof of the probability when the vertices satisfy the case \textbf{one far} configuration. By Lemma $5.11$, we only care about the bound of all probabilities of the vertices in this configuration. We choose the distinct vertices $i_1,i_2,i_3$ in the small-world random graph.
We assume that those vertices satisfy $||i_1-i_2|| \le \frac{k}{2}, ||i_2-i_3|| \le \frac{k}{2}$ and $||i_3-i_1|| > \frac{k}{2}$.  Suppose an edge $\{i_1,i_3\}$ is the only far edge with the distance on the torus between them greater than $\frac{k}{2}$ apart from each other, and the other edges $\{i_1,i_2\},\{i_2,i_3\}$ are constructed by a close distance of any two vertices. We know that there exists two possibilities to rewire and get a new edge $\{i_1,i_3\}$ which are the rewiring from vertex $i_1 \to i_3$ or rewiring from vertex $i_3 \to i_1$. 
\begin{defn}
the notation $\mathbb{P}(i_1 \xrightarrow{d} i_3\ |\ l)$ is the conditional probability of $d^{th}$ downside neighbor of vertex $i_1$ rewires to vertex $i_3$, given that $l$ vertices already rewired to vertex $i_1$.
\end{defn}
\begin{lem} Let $n \in \mathbb{N}$ be an arbitrary number of vertices, $k \in 2\mathbb{N}$ be the degree, and $p \in [0,1]$. For any vertex $i_1,i_3$ in the $SW(n,k,p)$ random graph such that $i_1 < i_3$, those vertices satisfy the condition $||i_3-i_1|| > \frac{k}{2}$. Let $l \le \frac{n}{2}$ be the number of rewirings from some vertices $j < i_1$ to vertex $i_1$. For any $d \in \{1,2,...,\frac{k}{2}\}$, it follows that the probabilities $\mathbb{P}(i_1 \xrightarrow{d} i_3\ |\ l) = O_{k}(\frac{1}{n})$ and $\mathbb{P}(i_3 \xrightarrow{d} i_1\ |\ l) = O_{k}(\frac{1}{n})$.
\end{lem}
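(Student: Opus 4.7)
My plan is to open up the definition of the rewiring step: the event $i_1 \xrightarrow{d} i_3$ splits into two independent pieces, the Bernoulli coin flip that selects ``rewire'' (probability $p$), and the uniform draw of a new endpoint from the allowed target set
\[
S \;=\; \{1,\ldots,n\} \setminus \bigl(\{i_1 - \tfrac{k}{2},\ldots,i_1 + \tfrac{k}{2}\} \cup \mathbf{N}(i_1)\bigr),
\]
with $\mathbf{N}(i_1)$ read as the neighborhood of $i_1$ at the precise moment the $d$-th downside edge of $i_1$ is processed. Conditioning on the same information,
\[
\mathbb{P}(i_1 \xrightarrow{d} i_3 \mid l) \;\le\; \frac{p}{|S|}\,\mathbf{1}\{i_3 \in S\},
\]
so the whole game reduces to lower-bounding $|S|$ by a constant multiple of $n$.

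For this bound I would count the excluded vertices. The ring $\{i_1-\tfrac{k}{2},\ldots,i_1+\tfrac{k}{2}\}$ contributes $k+1$. The current neighborhood $\mathbf{N}(i_1)$ contains whatever remains of the $k$ original ring-lattice neighbors (the degree contributed by $i_1$'s own rewirings is preserved since each rewiring removes one edge and adds one) together with the $l$ vertices that, by hypothesis, have rewired \emph{to} $i_1$ during earlier iterations of the algorithm. Hence $|\mathbf{N}(i_1)| \le k + l$, and the complement of $S$ has at most $(k+1) + (k+l) = 2k+1+l$ elements. Combined with the standing hypothesis $l \le n/2$ this yields $|S| \ge n/2 - 2k - 1 = \Omega_k(n)$, and therefore $p/|S| = O_k(1/n)$, which proves the first estimate.

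The second estimate $\mathbb{P}(i_3 \xrightarrow{d} i_1 \mid l) = O_k(1/n)$ is structurally identical, but carried out at the step where $i_3$'s $d$-th downside edge is processed. There the relevant exclusion set has size at most $2k+1+l'$, where $l'$ counts prior rewirings into $i_3$; the condition $||i_3 - i_1|| > \tfrac{k}{2}$ is used to ensure $i_1$ does not lie in the forbidden ring around $i_3$, so that $i_1$ is eligible as a target. Under the same size regime ($l' \le n/2$, which is tacitly in force wherever the lemma is applied), the lower bound $|S'| \ge \Omega_k(n)$ and the conclusion follow exactly as before. The main obstacle is therefore not analytic but combinatorial bookkeeping: one has to pin down precisely which vertices lie in $\mathbf{N}(i_1)$ (respectively $\mathbf{N}(i_3)$) at the exact moment of rewiring, given that the algorithm processes edges in a fixed order, and check that the conditioning $l \le n/2$ is enough to prevent the excluded set from consuming a positive fraction of $\{1,\ldots,n\}$.
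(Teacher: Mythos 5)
Your proposal is correct, and it takes a genuinely different (and leaner) route than the paper. The paper's proof of this lemma proceeds by an explicit decomposition over the fates of the earlier downside edges $\{i_1,i_1+1\},\dots,\{i_1,i_1+(d-1)\}$: it works out $d=1$ and $d=2$ by hand, then writes the general case as a sum over how many of those $d-1$ edges rewire (to vertices other than $i_3$), tracks the shrinking pool of admissible targets term by term, and observes that the resulting binomial sum $\sum_j\binom{d-1}{j}(1-p)^{d-1-j}p^j$ collapses to $1$, leaving essentially $\tfrac{p}{n-k-1-l}=O_k(\tfrac1n)$. You instead condition on the entire history up to the moment the $d$-th edge is processed, note that the conditional probability is exactly $p\cdot\mathbf{1}\{i_3\in S\}/|S|$, and reduce everything to the single combinatorial fact $|S|\ge n-(2k+1+l)\ge \tfrac n2-2k-1=\Omega_k(n)$. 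This buys you robustness: you never need to track how the target pool shrinks across successive rewirings (the place where the paper's denominators drift and require the telescoping identity to rescue them), and the bound is uniform over all realizations of the earlier edges. What you give up is the sharper, essentially exact value $\tfrac{p}{n-k-1-l}$ that the paper's computation produces, but only the $O_k(\tfrac1n)$ order is ever used downstream. Two small points to make explicit if you write this up: (i) the degree-preservation argument bounding $|\mathbf{N}(i_1)|\le k+l$ should mention that upside ring edges rewired \emph{away} from $i_1$ by earlier vertices only shrink the neighborhood, so the bound is safe; (ii) for the second estimate your reinterpretation of the conditioning as $l'\le \tfrac n2$ prior rewirings into $i_3$ matches what the paper itself does (it silently swaps the meaning of $l$ there), and the possibility that the edge $\{i_1,i_3\}$ already exists when $i_3$ is processed is harmlessly absorbed by your indicator $\mathbf{1}\{i_1\in S'\}$.
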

\begin{proof} Consider the probability of rewiring from $i_1 \to i_3$, we let there exist $l$ vertices already rewired to vertex $i_1$. In this proof, we only care the case $l \le \frac{n}{2}$.
We know that the vertex $i_1$ contains  $\frac{k}{2}$ downside edges. Due to the rewiring process, each edge $\{i_1,i_1+d\}$ for $d= \{1,...,\frac{k}{2}\}$ could possibly be replaced by the edge $\{i_1,i_3\}$. 
\begin{itemize}
    \item $d=1$, the edge $\{i_3,i_3+1\}$ is rewired with the probability $p$ and there exists $n-k-1-l$ (not vertex $i_3$, other $k$ neighborhood edges, and $l$ previous rewirings) choices for uniformly choosing vertex $i_1$ at random. We know $N(i_3) = \{i_3-\frac{k}{2},...,i_3-1,i_3+1,...,i_3+\frac{k}{2} \mathrm{\ (mod \ n)}\}$ since no edges $(i_3,i_3+v')$ for $v'= 1,...,\frac{k}{2}$ get rewired yet. Thus, 
\end{itemize}
$$\mathbb{P}(i_3 \xrightarrow{1} i_1 |\ l) = \frac{p}{n-k-1-l}$$ 
\begin{itemize}
    \item $d=2$, the edge $\{i_3,i_3+2\}$ is rewired with the probability $p$. There two cases to consider whether or not the previous edge $\{i_3,i_3+1\}$ is rewired to vertex not $i_1$.
\end{itemize}
\begin{align*}
  \mathbb{P}(i_3 \xrightarrow{2} i_1|\ l) &= \mathbb{P}(i_3 \xrightarrow{2} i_1, \text{but } \{i_3,i_3+1\} \ \mathrm{non-rewiring}) \\ &+ \mathbb{P}(i_3 \xrightarrow{2} i_1, \text{but } i_3+1 \not\to i_1).
\end{align*}
\noindent With the probability $1-p$, we consider the edge $\{i_3,i_3+1\}$ is non-rewiring. Then $\{i_3,i_3+2\}$ rewires with the probability $p$ to vertex $i_1$ with $n-k-1-l$ choices uniformly choosing at random. In addition, if $\{i_3,i_3+1\}$ is already rewired with the probability $p$ to some vertex not $i_1$, there are $n-k-2-l$ choices (not $i_1$, its neighbors, and $l$ previous rewirings) out of $n-k-1-l$ to uniformly be chosen. Finally, $\{i_3,i_3+2\}$ is rewired to vertex $i_1$ with $n-k-2-l$ choices left. Thus,
\begin{align*}
    \mathbb{P}(i_3 \xrightarrow{2} i_1|\ l) &= (1-p)\cdot\frac{p}{n-k-1-l} + \frac{(n-k-2-l)p}{n-k-1-l} \cdot \frac{p}{n-k-2-l} \\ &= \frac{p}{n-k-1-l}\cdot(1-p+p) 
\end{align*}
The last equality holds by the simplification. \\
    \indent Let $d \in \{1,2,...,\frac{k}{2}\}$, there are $d$ different cases to consider. We start with all edges $\{i_1,i_1+1\},\{i_1,i_1+2\},...,\{i_1,i_1+(d-1)\}$ that do not  rewire with the probability $(1-p)^{d-1}$. Then an edge $\{i_1,i_1+d\}$ gets rewired to $i_3$ by uniformly choosing $n-k-1-l$ choices (not $i_1$, its neighbors, and other previous $l$ vertices). In the second case, we have $\binom{d-1}{1}$ ways to pick one edge from $\{i_1,i_1+1\},\{i_1,i_1+2\},...,\{i_1,i_1+(d-1)\}$ to rewire with the probability $p$ to vertex not $i_3$ by choosing $n-k-2-l$ choices out of $n-k-1-l$. We keep the remaining edges non-rewiring with the probability $(1-p)^{d-2}$ before $\{i_1,i_1+d\}$ is rewired to $i_3$ by uniformly choosing $n-k-2-l$ choices (not the first rewiring vertex, its neighbors, and other previous $l$ vertices). The third step begins with $\binom{d-1}{2}$ ways to pick two edges from $\{i_1,i_1+1\},\{i_1,i_1+2\},...,\{i_1,i_1+(d-1)\}$ to be rewired. The first chosen edge gets rewired by choosing a random vertex not $i_3$ and previous $l$ vertices with $n-k-2-l$ choices out of $n-k-1-l$, and the second one gets rewired by choosing another random vertex with $n-k-3-l$ choices (not $i_3$, its neighbors, the first rewiring vertex, and previous $l$ vertices) out of $n-k-2-l$. We keep the remaining edges non-rewiring with the probability $(1-p)^{d-3}$, and then $\{i_1,i_1+d\}$ is rewired by uniformly choosing $i_3$ from the remaining $n-k-3-l$ choices. It continues the same procedure for computing the probability until all chosen $(d-1)$ edges from $\{i_1,i_1+1\},\{i_1,i_1+2\},...,\{i_1,i_1+(d-1)\}$ get rewired. Finally, the last edge $\{i_1,i_1+d\}$ is rewired with the probability $p$ by uniformly choosing vertex $i_3$ from the remaining $n-k-\frac{k}{2}-l$ choices. Therefore, we have the conditional probability
Then, we consider each probability. \\
$\mathbb{P}(i_1 \xrightarrow{d} i_3, \text{but } \{i_1,i_1+1\},\{i_1,i_1+2\},...,\{i_1,i_1+(d-1)\} \text{ non-rewiring})$, \\
$\mathbb{P}(i_1 \xrightarrow{d} i_3, \text{but only one edge rewires to not } i_3)$
\begin{align*}
  &\mathbb{P}(i_1 \xrightarrow{d} i_3 |\ l) \\ &=  \mathbb{P}(i_1 \xrightarrow{d} i_3, \text{but } \{i_1,i_1+1\},\{i_1,i_1+2\},...,\{i_1,i_1+(d-1)\} \text{ non-rewiring}) \\ &+ \mathbb{P}(i_1 \xrightarrow{d} i_3, \text{but only one edge rewires to not } i_3) \\ &+ \mathbb{P}(i_1 \xrightarrow{d} i_3, \text{but two edges rewire to not }i_3) + ... + \\ &+ \mathbb{P}(i_1 \xrightarrow{d} i_3, \text{but all } (d-2) \text{ edges rewire to not }i_3)\\ &+ \mathbb{P}(i_1 \xrightarrow{d} i_3, \text{but }\{i_1,i_1+1\},\{i_1,i_1+2\},...,\{i_1,i_1+(d-1)\} \text{ rewire to not } i_3)
\end{align*} 
Then, 
\begin{align*}
  &= \binom{d-1}{0}(1-p)^{d-1}\cdot\frac{p}{n-k-1-l} \\ &+ \binom{d-1}{1}(1-p)^{d-2}\cdot \frac{(n-k-2-l)p}{n-k-1-l}\cdot\frac{p}{n-k-2-l} \\ &+ \binom{d-1}{2}(1-p)^{d-3}\cdot\frac{(n-k-2-l)p}{n-k-1-l}\cdot\frac{(n-k-3-l)p}{n-k-2-l}\cdot\frac{p}{n-k-3-l} \ + ... + \\ &+ \binom{d-1}{d-2}(1-p)\cdot\frac{(n-k-2-l)p}{n-k-1-l}\cdot\frac{(n-k-3-l)p}{n-k-2-l}\cdot\cdot\cdot \\ &\cdot\cdot\cdot \frac{(n-k-(\frac{k}{2}-1)-l)p}{n-k-(\frac{k}{2}-2)-l}\cdot\frac{p}{n-k-(\frac{k}{2}-1)-l} \\ &+ \binom{d-1}{d-1}\frac{(n-k-2-l)p}{n-k-1-l}\cdot\frac{(n-k-3-l)p}{n-k-2-l}\cdot\cdot\cdot \frac{(n-k-(\frac{k}{2})-l)p}{n-k-(\frac{k}{2}-1)-l}\cdot\frac{p}{n-k-\frac{k}{2}-l} \\ &= \left(\frac{p}{n-k-1-l}\right) \Bigg[\binom{d-1}{0}(1-p)^{d-1} + \binom{d-1}{1}(1-p)^{d-2}p \\ &+ \binom{d-1}{2}(1-p)^{d-3}p^2 +...+ \binom{d-1}{d-2}(1-p)p^{d-2} + \binom{d-1}{d-1}p^{d-1}\Bigg] \\ &= \left(\frac{p}{n-k-1-l}\right)\Bigg(\sum_{j=0}^{d-1}\binom{d-1}{j}(1-p)^{d-1-j}p^j \Bigg) \\ &\le \left(\frac{1}{n-k-1-l}\right)\Bigg(\sum_{j=0}^{d-1}\binom{d-1}{j}(1-p)^{d-1-j}p^j \Bigg) \\ &= O_{k}(\frac{1}{n}), \mathrm{\ since\ }l \le \frac{n}{2}.
\end{align*}
Since the rewiring $i_3 \xrightarrow{d} i_1$ given that there exist some $l \le \frac{n}{2}$ previous edges rewired to vertex $i_3$ (not $i_1$ itself), it can be done by rewiring from one of $i_3$'s downside neighbors to some vertex choosing uniformly with the constraint $l$. Since we relax the number of the previous rewirings to $i_3$ with the extreme range of $l \le \frac{n}{2}$, the computation can exclude the case that there exists the rewiring $i_1 \to i_3$ by the time the vertex $i_3$ is considered. Hence, it follows the same computation as the rewiring from $i_1 \xrightarrow{d} i_3$. The probability has the same bound which is
$\mathbb{P}(i_3 \xrightarrow{d} i_1 |\ l) = O_{k}(\frac{1}{n})$.
\end{proof}
\begin{lem}
Let $i_1,i_3$ be vertices in the $SW(n,k,p)$ random graph which $i_1 < i_3$ and $||i_1-i_3|| > \frac{k}{2}$. The probabilities $\mathbb{P}(i_1 \to i_3) = O_{k}(\frac{1}{n})$ and $\mathbb{P}(i_3 \to i_1) = O_{k}(\frac{1}{n})$.
\end{lem}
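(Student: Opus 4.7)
The plan is to deduce this lemma from Lemma 5.14 by a union bound over the choice of downside edge $d$, combined with the law of total probability over the number of prior rewirings into vertex $i_1$ (respectively $i_3$). By Notation 5.10, the event $\{i_1 \to i_3\}$ is the union over $d \in \{1,\ldots,\tfrac{k}{2}\}$ of the disjoint events $\{i_1 \xrightarrow{d} i_3\}$ (disjoint because each $d$ refers to a distinct downside edge of $i_1$ and at most one edge can be rewired to yield $\{i_1,i_3\}$, as duplication is forbidden). Hence
\[
\mathbb{P}(i_1 \to i_3) \;=\; \sum_{d=1}^{k/2} \mathbb{P}(i_1 \xrightarrow{d} i_3),
\]
so it suffices to show $\mathbb{P}(i_1 \xrightarrow{d} i_3) = O_k(1/n)$ for each fixed $d$, since $k$ is a constant and the sum has only $\tfrac{k}{2}$ terms.

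Next I would condition on the random variable $L$ = number of previous rewirings from vertices $j < i_1$ that landed on $i_1$ by the time the algorithm begins processing $i_1$'s downside edges. By the law of total probability,
\[
\mathbb{P}(i_1 \xrightarrow{d} i_3) \;=\; \sum_{l \ge 0} \mathbb{P}(i_1 \xrightarrow{d} i_3 \mid L = l)\,\mathbb{P}(L = l).
\]
Split the sum at $l = n/2$. On the event $\{L \le n/2\}$, Lemma 5.14 applies directly and gives $\mathbb{P}(i_1 \xrightarrow{d} i_3 \mid L = l) = O_k(1/n)$ uniformly in $l$. On the event $\{L > n/2\}$ we use the trivial bound by $1$, so the contribution is at most $\mathbb{P}(L > n/2)$.

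To finish, I would control $\mathbb{P}(L > n/2)$ by observing that the total number of rewirings performed by the algorithm is at most $\tfrac{nk}{2}$ (one per edge considered), each rewiring selects its target uniformly from a set of size at least $n - k - 1 - l \ge n/2 - k - 1$ when $l \le n/2$, and so the probability that any single rewiring lands on the specific vertex $i_1$ is $O(1/n)$. Consequently $\mathbb{E}[L] = O_k(1)$ and Markov's inequality yields $\mathbb{P}(L > n/2) = O_k(1/n)$. Combining the two parts gives $\mathbb{P}(i_1 \xrightarrow{d} i_3) = O_k(1/n)$, and summing the $\tfrac{k}{2}$ values of $d$ preserves this bound. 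The argument for $\mathbb{P}(i_3 \to i_1)$ is identical after swapping the roles of $i_1$ and $i_3$; the remark at the end of the proof of Lemma 5.14 already explains that the conditional analysis transfers verbatim once we condition on the number of prior rewirings into $i_3$.

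The main obstacle I anticipate is giving a clean justification of $\mathbb{P}(L > n/2) = O_k(1/n)$, since $L$ depends on the entire rewiring history preceding vertex $i_1$ and is not obviously small without some averaging argument. Markov's inequality on $\mathbb{E}[L]$ is the cleanest route, but writing it down rigorously requires a small amount of care about the dependency structure (e.g.\ bounding each indicator ``the $r^{\text{th}}$ prior rewiring lands on $i_1$'' by $1/(n-k-1-l_r) \le 2/n$ for the relevant range of $l_r$). Everything else is a routine summation of the $O_k(1/n)$ bounds from Lemma 5.14.
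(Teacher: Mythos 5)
Your proposal follows the same overall route as the paper's proof: decompose $\mathbb{P}(i_1\to i_3)$ as a sum over the $\frac{k}{2}$ downside edges $d$, condition on the number $l$ of prior rewirings into the vertex, split the sum at $l=\frac{n}{2}$, and invoke the conditional bound $\mathbb{P}(i_1 \xrightarrow{d} i_3 \mid l)=O_k(\frac{1}{n})$ (the paper's Lemma 6.3, your ``Lemma 5.14'') on the range $l\le \frac{n}{2}$, reducing everything to controlling the tail event $\{l>\frac{n}{2}\}$. The one place you diverge is that tail estimate: the paper bounds $\mathbb{P}(l=t)$ for $t>\frac{n}{2}$ directly by $\binom{n}{t}\cdot(\frac{c}{n})^{n/4}$ (arguing that at least $\frac{n}{4}$ rewirings must each independently hit $i_1$, each with probability $O(\frac{1}{n})$) and sums via the binomial theorem to obtain $O_k(\frac{1}{n^3})$, whereas you propose Markov's inequality on $\mathbb{E}[L]=O_k(1)$, giving $\mathbb{P}(L>\frac{n}{2})=O_k(\frac{1}{n})$. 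Both suffice for the stated $O_k(\frac{1}{n})$ conclusion; your Markov route is arguably cleaner, since the expectation bound $\mathbb{E}[L]\le\frac{nk}{2}\cdot O(\frac{1}{n})$ avoids the paper's rather loose overcounting by $\binom{n}{t}$ of non-independent events. Note, however, that both arguments share the same unaddressed subtlety you flag at the end: the per-rewiring hitting probability $\frac{1}{n-k-1-l_r}$ is only $O(\frac{1}{n})$ when the source vertex's accumulated neighborhood is itself of size $o(n)$, and neither your sketch nor the paper establishes this independently before using it.
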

\begin{proof}
Let $i_1,i_3$ be distinct vertices on the $SW(n,k,p)$ random graph. We will consider the case $i_1 \to i_3$, and then we will use the same computation to come up with the probability of $i_3 \to i_1$. 
Let $l$ be the number of previous rewirings to vertex $i_1$. In this proof, we try to avoid some complicated computation by having a bound of $0 \le l \le n$. We consider 
\begin{align*}
    \mathbb{P}(i_1 \xrightarrow{d} i_3) &= \sum_{t=0}^{n}\mathbb{P}(i_1 \xrightarrow{d} i_3 |\ l=t)\cdot \mathbb{P}(l=t) \\ &= \sum_{t=0}^{\frac{n}{2}}\mathbb{P}(i_1 \xrightarrow{d} i_3 |\ l=t)\cdot \mathbb{P}(l=t) + \sum_{t=\frac{n}{2}+1}^{n}\mathbb{P}(i_1 \xrightarrow{d} i_3 |\ l=t)\cdot \mathbb{P}(l=t).
\end{align*}
By Lemma $6.3$, when $0 \le l \le \frac{n}{2}$, the probability $\mathbb{P}(i_1 \xrightarrow{d} i_3 |\ l=t)$ is bounded by $O_{k}(\frac{1}{n})$. It makes the term $\sum_{t=0}^{\frac{n}{2}}\mathbb{P}(i_1 \xrightarrow{d} i_3 |\ l=t)\cdot \mathbb{P}(l=t)$ have the same bound. However, if $l \ge \frac{n}{2}+1$, the second term $\sum_{t=\frac{n}{2}+1}^{n}\mathbb{P}(i_1 \xrightarrow{d} i_3 |\ l=t)\cdot \mathbb{P}(l=t)$ will be bounded by the probability $\mathbb{P}(l \ge \frac{n}{2}+1)$. It follows that
\begin{align*}
    \mathbb{P}(i_1 \xrightarrow{d} i_3) &\le \sum_{t=0}^{\frac{n}{2}}\mathbb{P}(i_1 \xrightarrow{d} i_3 |\ l=t)\cdot \mathbb{P}(l=t) + \sum_{t=\frac{n}{2}+1}^{n}\mathbb{P}(i_1 \xrightarrow{d} i_3 |\ l=t)\cdot \mathbb{P}(l=t) \\ &\le O_{k}(\frac{1}{n}) + \sum_{t=\frac{n}{2}+1}^{n} \mathbb{P}(l=t).
\end{align*}  
\indent In addition, the probability $\mathbb{P}(l=t)$ is computed by a bound. We start computing the combination of choosing $l=t$ options from $n$ options to rewire to vertex $i_1$ before this vertex is considered in the rewiring process. If we have $l \ge \frac{n}{2}+1$, the vertex $i_1$  may be close to vertex $n$. The rewiring algorithm does not allow to choose a new vertex that lies within $k$ neighbors. Hence, the closest vertex $j$ that can be rewired to vertex $i_1$ cannot be too close to $i_1$. In order to simplify the computation, we ignore all upside $\frac{n}{4}$ neighbors of $i_1$. \\ 
\indent Since we assume that $l \ge \frac{n}{2}+1$, we must carefully consider the proper bound of the probability. In order to have the bound, we need to compute the worst case of location of vertex $i_1$ for some number of $l$. Since we ignore all $\frac{n}{4}$ upside neighbors of $i_1$, we have at least $\frac{n}{4}$ all connections to $i_1$. Each rewiring to vertex $i_1$ has the probability $\frac{1}{n-k-1-l}$ for the number $l \le \frac{n}{2}$ vertices already rewired to vertex $i_1$. To compute a bound of this fraction, we know that there exists some number $c \in \mathbb{R}$ such that $\frac{1}{n-k-1-l} \le \frac{c}{n}$. Since there are at least $ \frac{n}{4}$ vertices rewire to vertex $i_1$ and the rewirings are independent, it follows that
\begin{align*}
    \mathbb{P}(l=t) \le \binom{n}{t}\cdot \big(\frac{c}{n}\big)^{\frac{n}{4}}.
\end{align*}
Hence,
\begin{align*}
    \mathbb{P}(i_1 \xrightarrow{d} i_3) &\le O_{k}(\frac{1}{n}) + \sum_{t=\frac{n}{2}+1}^{n}\binom{n}{t} \cdot \big(\frac{c}{n}\big)^{\frac{n}{4}}.
\end{align*}
By the Binomial Theorem, it follows that
\begin{align*}
    \mathbb{P}(i_1 \xrightarrow{d} i_3) &\le O_{k}(\frac{1}{n}) + 2^n \cdot \big(\frac{c}{n}\big)^{\frac{n}{4}} \le O_{k}(\frac{1}{n}) + O_{k}(\frac{1}{n^3}) \le O_{k}(\frac{1}{n}).
\end{align*}
For $1 \le d \le \frac{k}{2}$, a particular downside $d^{th}$ neighbor of $i_1$ can rewire to vertex $i_3$. Thus, we have the probability 
\begin{align*}
    \mathbb{P}(i_1 \to i_3) &= \sum_{d=1}^{\frac{k}{2}}\mathbb{P}(i_1 \xrightarrow{d} i_3) = (\frac{k}{2}) \cdot O_{k}(\frac{1}{n}) \le O_{k}(\frac{1}{n}).
\end{align*}
Similarly, we use a bound of the probability of rewiring given that $0 \le l \le n$. The rewiring $i_3 \to i_1$, follows the same computation as $i_1 \to i_3$. Thus we have the probability $\mathbb{P}(i_3 \to i_1) = O_{k}(\frac{1}{n})$ as well.
\end{proof}
\begin{lem}
   Let $i_1,i_2,i_3$ be distinct vertices in  $SW(n,k,p)$ random graph. Suppose three vertices satisfy the condition $||i_1-i_2|| \le \frac{k}{2}$ and $||i_1-i_3|| > \frac{k}{2}$, then it follows that the probability  $\mathbb{P}(i_1 \to i_3, \text{given an edge } \{i_1,i_2\} \text{ non-rewiring}) = O_{k}(\frac{1}{n}).$
\end{lem}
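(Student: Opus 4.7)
The plan is to reduce this conditional bound to the unconditional bound already established in Lemma 6.4 via a short Bayes-type argument. First I will write
\[
\mathbb{P}\bigl(i_1 \to i_3 \,\big|\, \{i_1,i_2\} \text{ non-rewiring}\bigr) \;=\; \frac{\mathbb{P}\bigl(i_1 \to i_3,\ \{i_1,i_2\} \text{ non-rewiring}\bigr)}{\mathbb{P}\bigl(\{i_1,i_2\} \text{ non-rewiring}\bigr)},
\]
and then control the numerator and denominator separately. The numerator is bounded above trivially by $\mathbb{P}(i_1 \to i_3)$, which is $O_k(1/n)$ by Lemma 6.4. For the denominator I will argue that the event ``$\{i_1,i_2\}$ non-rewiring'' is a single Bernoulli$(1-p)$ trial of the algorithm, so its probability is the positive constant $1-p$ (assuming $p<1$; if $p=1$ the conditioning event has probability zero and the statement is vacuous). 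Dividing then gives $O_k(1/n)/(1-p) = O_k(1/n)$, as desired.

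To justify the value of the denominator, I would unpack the algorithm. Because $||i_1 - i_2|| \le k/2$, the edge $\{i_1,i_2\}$ belongs to the underlying ring lattice and is processed by a unique ``owner'': if $i_2 = i_1 + d^*$ for some $d^* \in \{1,\dots,k/2\}$, the owner is $i_1$; otherwise $i_1 = i_2 + d^*$ and the owner is $i_2$. The owner performs one independent Bernoulli$(p)$ rewiring trial for slot $d^*$, and the constraint on $j'$ in the algorithm forbids any other rewiring step from re-creating a close edge (since $j' \notin \{i-k/2,\dots,i+k/2\}$). Hence the survival of $\{i_1,i_2\}$ in the final graph coincides exactly with the ``keep'' outcome of that single coin flip, whose marginal probability is $1-p$, independent of every other randomized choice in the algorithm.

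The main obstacle I anticipate is the potential correlation between the conditioning event and the target rewiring $i_1 \to i_3$, since both live inside the same randomized algorithm and the neighborhood sets $\mathbf{N}(\cdot)$ evolve dynamically as rewirings are applied. The argument above sidesteps this by using only the trivial inequality $\mathbb{P}(A \cap B) \le \mathbb{P}(A)$ in the numerator and the exact marginal $\mathbb{P}(B) = 1-p$ in the denominator; no finer dependence analysis is required to preserve the $O_k(1/n)$ bound inherited from Lemma 6.4.
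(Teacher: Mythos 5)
Your argument is correct, but it reaches the bound by a different route than the paper. The paper's proof stays inside the algorithmic computation: it asserts that conditioning on keeping $\{i_1,i_2\}$ only removes one of the $\frac{k}{2}$ downside slots of $i_1$ as a candidate for rewiring to $i_3$, and hence that $\mathbb{P}(i_1 \xrightarrow{d} i_3 \mid \{i_1,i_2\}\ \mathrm{non\text{-}rewiring}) \le \mathbb{P}(i_1 \xrightarrow{d} i_3)$ termwise before summing over $d$ and invoking Lemma 6.4. That termwise inequality is a monotonicity claim that is plausible here but is not true for arbitrary events and is not actually verified in the paper. You sidestep it entirely with the elementary bound $\mathbb{P}(A \mid B) \le \mathbb{P}(A)/\mathbb{P}(B)$, plus the observation (correctly justified via the structure of the algorithm: the keep/rewire decision for a lattice edge is a single independent Bernoulli trial, and no rewiring step can re-create an edge inside the $\frac{k}{2}$-neighborhood) that $\mathbb{P}(\{i_1,i_2\}\ \mathrm{non\text{-}rewiring}) = 1-p$ exactly. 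Your version is cleaner and requires no re-examination of the conditional dynamics; the price is that your implied constant is $\frac{1}{1-p}$, so the bound is really $O_{k,p}(\frac{1}{n})$ rather than $O_k(\frac{1}{n})$ uniformly in $p$ (and degenerates as $p \to 1$, a case you correctly flag as vacuous). Since $p$ is held fixed throughout the paper and the lemma is only used to kill error terms in the limit $n \to \infty$, this weaker uniformity is harmless; it is worth noting, though, that where the lemma is actually applied (Lemma 6.6 and Lemma 6.11) the conditional probability is always multiplied back by $\mathbb{P}(A_{i_1i_2}=1) = 1-p$, so your numerator bound $\mathbb{P}(i_1 \to i_3,\ \{i_1,i_2\}\ \mathrm{non\text{-}rewiring}) \le \mathbb{P}(i_1 \to i_3) = O_k(\frac{1}{n})$ already delivers everything the downstream arguments need, with a constant independent of $p$.
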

\begin{proof}
To compute the probability of rewiring from $i_1 \to i_3$ but given the edge $\{i_1,i_2\}$ is non-rewiring, we consider that there exist $\frac{k}{2}-1$ edges out of $\frac{k}{2}$ to possibly be rewired to $i_3$ because we need to keep one edge $\{i_1,i_2\} = \{i_1,v'\}$ non-rewiring for a chosen vertex $v' \in \{i_1+1,...,i_1+\frac{k}{2}\}$. Let $l$ be the number of vertices that rewired to vertex $i_1$.
With a similar computation from Lemma $6.3$, if one of the downside neighborhood edges of vertex $i_1$ (includes an edge $\{i_1,i_2\}$) can be rewired to vertex $i_3$ with a far distance on the torus between $i_1,i_3$ $(> \frac{k}{2})$, the conditional probability $\mathbb{P}(i_1 \xrightarrow{d} i_3 |\ l) = O_k (\frac{1}{n})$. By Lemma $6.4$, the probability $\mathbb{P}(i_1 \xrightarrow{d} i_3) = O_{k}(\frac{1}{n})$. Since not all downside neighborhood edges of $i_1$ have a chance to be rewired to vertex $i_3$ (need to keep an edge $\{i_1,i_2\}$ non-rewiring), it comes up with a smaller probability of the rewiring $i_1 \xrightarrow{d} i_3$. It follows that
\begin{align*}
    \mathbb{P}(i_1 \to i_3, \text{given } \{i_1,i_2\} \text{ non-rewiring}) &= \sum_{d=1}^{\frac{k}{2}} \mathbb{P}(i_1 \xrightarrow{d} i_3 |\ \{i_1,i_2\} \text{ non-rewiring})  \\ &\le \sum_{d=1}^{\frac{k}{2}}\mathbb{P}(i_1 \xrightarrow{d} i_3) \le O_{k}(\frac{1}{n}).
\end{align*}
\end{proof}
\begin{lem} Given the case \textbf{one far} configuration. Let $i_1,i_2,i_3$ be distinct vertices on the $SW(n,k,p)$ random graph satisfies one of the following three conditions;
\begin{enumerate}
    \item  $||i_1-i_2|| \le \frac{k}{2},||i_2-i_3|| \le \frac{k}{2},||i_3-i_1|| > \frac{k}{2}$
    \item  $||i_1-i_2|| \le \frac{k}{2},||i_2-i_3||>\frac{k}{2},||i_3-i_1|| \le \frac{k}{2}$
    \item  $||i_1-i_2|| > \frac{k}{2},||i_2-i_3|| \le \frac{k}{2},||i_3-i_1|| \le \frac{k}{2}$
\end{enumerate}
Then the probability $\mathbf{P}_2 = O_{k}(\frac{1}{n})$.
\end{lem}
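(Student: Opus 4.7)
The plan is to reduce the joint event $\{A_{i_1i_2}=1, A_{i_2i_3}=1, A_{i_3i_1}=1\}$ to the single event that the "far" edge is present, and then invoke Lemma 6.4. In each of the three sub-configurations listed in the statement, exactly one of the three edges, call it $\{u,v\}$, satisfies $\|u-v\|>\frac{k}{2}$. Because the original ring lattice contains only edges between vertices at distance at most $\frac{k}{2}$, the edge $\{u,v\}$ is absent initially and can appear in the final graph only through a rewiring whose new target is $v$ (originating from a downside edge of $u$) or whose new target is $u$ (originating from a downside edge of $v$). Hence
\[
\mathbb{P}(A_{i_1i_2}=1,\ A_{i_2i_3}=1,\ A_{i_3i_1}=1) \;\le\; \mathbb{P}(A_{uv}=1) \;\le\; \mathbb{P}(u \to v) + \mathbb{P}(v \to u),
\]
where the last step is a union bound over the only two mechanisms that can create $\{u,v\}$.

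Next I would apply Lemma 6.4 to each of the two rewiring probabilities. Each is bounded by $O_k(\tfrac{1}{n})$, so their sum is also $O_k(\tfrac{1}{n})$, and this bounds the joint probability in that sub-configuration. Explicitly, in condition (1) the far edge is $\{i_3,i_1\}$; in condition (2) it is $\{i_2,i_3\}$; and in condition (3) it is $\{i_1,i_2\}$. In each case the reduction and the appeal to Lemma 6.4 are identical; only the identity of the far edge changes. Since $\mathbf{P}_2$ is defined as the maximum of such probabilities over these three configurations, the $O_k(\tfrac{1}{n})$ bound is preserved under the maximum, giving $\mathbf{P}_2 = O_k(\tfrac{1}{n})$.

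The conceptual step doing all the work is the monotonicity $\mathbb{P}(\text{all three edges present}) \le \mathbb{P}(\text{far edge present})$, which lets me drop the two close edges from the joint event entirely. No genuine obstacle is anticipated, since the quantitative rewiring bound is already established in Lemma 6.4, and the close-edge probabilities (each trivially at most $1$) only contribute a bounded factor that cannot degrade the $1/n$ order. If a sharper constant were desired, one could instead condition on the close edges being present and appeal to Lemma 6.5, but the weaker bound suffices here.
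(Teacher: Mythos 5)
Your proposal is correct, and it reaches the same bound through a cleaner route than the paper. The paper's proof keeps all three edge events in play: it splits $\mathbb{P}(A_{i_1i_2}=1, A_{i_2i_3}=1, A_{i_3i_1}=1)$ according to whether the far edge arises from $i_1 \to i_3$ or $i_3 \to i_1$, asserts that the close-edge events are independent of the rewiring event so as to factor out a $(1-p)^2$, and then invokes both Lemma 6.4 and the conditional variant Lemma 6.5 (the latter to handle $\mathbb{P}(i_1 \to i_3 \mid A_{i_1i_2}=1)$, since keeping $\{i_1,i_2\}$ unrewired constrains which downside edges of $i_1$ are available to rewire). You instead discard the two close edges outright via the monotonicity $\mathbb{P}(\text{all three edges}) \le \mathbb{P}(A_{uv}=1)$, observe that a far edge can only be created by a rewiring originating at one of its two endpoints, and union-bound by $\mathbb{P}(u\to v)+\mathbb{P}(v\to u)$, needing only Lemma 6.4. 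What you give up is the explicit $(1-p)^2$ constant (irrelevant here, since only the order in $n$ matters for Lemma 5.11 and the final computation, where $\mathbf{C}_2\mathbf{P}_2$ is absorbed into an $O_k(1)$ error term); what you gain is that you never have to justify the independence of the close-edge events from the rewiring events, a claim the paper states without proof and which is delicate given the sequential, history-dependent nature of the rewiring algorithm. Your version is therefore not only shorter but arguably more robust.
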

\begin{proof}
 Let $\mathbf{P}_{2,i}$ be a maximum probability of the above condition $i$ for \textbf{one far} configuration. We will prove the probability bound  $\mathbf{P}_{2,1}$ and then use the result to come up with others probabilities $\mathbf{P}_{2,2}$ and $\mathbf{P}_{2,3}$.
 First, we choose the vertices $i_1,i_2,i_3$
 that satisfy $||i_1-i_2|| \le \frac{k}{2},||i_2-i_3|| \le \frac{k}{2},||i_3-i_1|| > \frac{k}{2}$. 
 We assume those vertices give a maximum probability of the first condition of \textbf{one far} configuration. We keep two edges $\{i_1,i_2\}, \{i_2,i_3\}$ non-rewiring and rewire an edge from either $i_1$ neighbor or $i_3$ neighbor to a new edge $\{i_1,i_3\}$. To construct the edge $\{i_1,i_3\}$, we start with two possibilities for rewiring, which are the rewiring $i_1 \to i_3$ or $i_3 \to i_1$. We know that the probabilities will be different depending on where the vertices are in the small-world random graph. It is easier for this computation because we will use a bound for the probability. Hence,
\begin{align*}
    \mathbf{P}_{2,1} &= \mathbb{P}(A_{i_1i_2}= 1, A_{i_2i_3} = 1, A_{i_3i_1} = 1)\\ &= \mathbb{P}(A_{i_1i_2}= 1, A_{i_2i_3} = 1, i_1 \to i_3) + \mathbb{P}(A_{i_1i_2}= 1, A_{i_2i_3} = 1, i_3 \to i_1)
\end{align*}
The event $A_{i_2i_3}=1$ is independent from $A_{i_1i_2}=1$ and $i_1 \to i_3$, and the event $i_3 \to i_1$ is independent from $A_{i_1i_2}=1, A_{i_2i_3}=1$. Thus,
\begin{align*}
    \mathbf{P}_{2,1} &= \mathbb{P}(A_{i_2i_3} = 1)\cdot \mathbb{P}(A_{i_1i_2} = 1, i_1 \to i_3) + \mathbb{P}(A_{i_1i_2} = 1, A_{i_2i_3} = 1)\cdot \mathbb{P}(i_3 \to i_1) \\ &= \mathbb{P}(A_{i_2i_3} = 1)\cdot \mathbb{P}(i_1 \to i_3 | A_{i_1i_2} = 1) \cdot \mathbb{P}(A_{i_1i_2} = 1) \\ &+ \mathbb{P}(A_{i_1i_2} = 1, A_{i_2i_3} = 1) \cdot \mathbb{P}(i_3 \to i_1)
\end{align*}
By Lemmas 6.4 and 6.5, the probability 
    $$\mathbf{P}_{2,1} \le (1-p)^2\cdot O_{k}(\frac{1}{n}) + (1-p)^2\cdot O_{k}(\frac{1}{n}) \le (2)O_{k}(\frac{1}{n}) = O_{k}(\frac{1}{n}).$$
\indent For the second condition, $\{i_2,i_3\}$ is the only far edge that can be rewired from either vertex $i_2$ or $i_3$.
This gives us two close edges $\{i_1,i_2\}, \{i_3,i_1\}$ with probability $(1-p)^2$. In this situation, we can only consider the probability for rewiring of far edge $\{i_2,i_3\}$.
By Lemma $6.4$, since $||i_2-i_3|| > \frac{k}{2}$, the probability bound is $\mathbb{P}(i_2 \to i_3) = O_{k}(\frac{1}{n})$. Moreover, we know that $||i_3-i_1|| \le \frac{k}{2}$ and $||i_3-i_2|| > \frac{k}{2}$. We can conclude from Lemma $6.5$ that the probability $\mathbb{P}(i_3 \to i_2 | \{i_3,i_1\}\text{\ non-rewiring}) = O_{k}(\frac{1}{n})$. Thus, the probability $\mathbf{P}_{2,2} = \max\{\mathbb{P}(A_{i_1i_2}= 1, A_{i_2i_3} = 1, A_{i_3i_1} = 1)\ | \ i_1,i_2,i_3 \mathrm{\ satisfy\ second\ condition}\} = O_{k}(\frac{1}{n})$. \\ \indent The third condition follows the same computation as the first and second. We have $||i_1,i_2|| > \frac{k}{2}$, $||i_2-i_3|| \le \frac{k}{2}$, and $||i_3-i_1|| \le \frac{k}{2}$. By Lemmas $6.4$ and $6.5$, it shows that the probability $\mathbf{P}_{2,3} = \max\{\mathbb{P}(A_{i_1i_2}= 1, A_{i_2i_3} = 1, A_{i_3i_1} = 1) \ | \ i_1,i_2,i_3 \mathrm{\ satisfy\ third\ condition}\} = O_{k}(\frac{1}{n})$. Since three probabilities have the same bound, we conclude that the probability of \textbf{one far} configuration is $\mathbf{P}_2 = \max\{\mathbf{P}_{2,1}, \mathbf{P}_{2,2}, \mathbf{P}_{2,3}\} = O_{k}(\frac{1}{n})$.
\end{proof}
\indent The next part illustrates the proof of the case \textbf{two far} configuration. We mainly focus on the condition $||i_1-i_2|| \le \frac{k}{2}, ||i_2-i_3|| > \frac{k}{2}$ and $||i_3-i_1|| > \frac{k}{2}$ for distinct vertices $1 \le i_1 < i_2 < i_3 \le n$ in the $SW(n,k,p)$ random graph. In addition, we use the same computing idea from this condition to prove that the same bound holds with all three conditions. \\ \indent We assume that the edge $\{i_1,i_2\}$ is the only close edge being constructed by the vertices $i_1,i_2$ with a close distance on the torus $(\le \frac{k}{2} \ \mathrm{apart})$. The other edges  $\{i_2,i_3\}$ and $\{i_3,i_1\}$ are constructed by two far distance vertices $(> \frac{k}{2} \ \mathrm{apart})$. This section provides two lemmas about the rewiring either from vertex $i_3 \to i_1$ or $i_3 \to i_2$ separately, and the rewiring from vertex $i_3 \to i_1$ and $i_3 \to i_2$ together after the random graph is created. 
\begin{lem}
    Let $n \in \mathbb{N}$ be an arbitrary number of vertices, $k \in 2\mathbb{N}$ be the degree, and $p \in [0,1]$. For any distinct vertex $i_1,i_2,i_3$ in the $SW(n,k,p)$ random graph, those vertices satisfy the condition $||i_2-i_3|| > \frac{k}{2}$ and $||i_3-i_1|| > \frac{k}{2}$. Let $l \le \frac{n}{2}$ be the number of previous rewiring edges to vertex $i_3$. For any $d \in \{1,2,...,\frac{k}{2}\}$, it follows that the probabilities
    $\mathbb{P}(i_3 \xrightarrow{d} i_1 |\ l) = O_{k}(\frac{1}{n})$ and $\mathbb{P}(i_3 \xrightarrow{d} i_2 |\ l) = O_{k}(\frac{1}{n})$.
\end{lem}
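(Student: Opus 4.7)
The plan is to mirror the combinatorial enumeration used in Lemma $6.3$, because the conditional probability $\mathbb{P}(i_3 \xrightarrow{d} i_1 \mid l)$ concerns only the rewiring process of the $\frac{k}{2}$ downside edges emanating from $i_3$, together with the constraint that $l \le \frac{n}{2}$ previous vertices have rewired into $i_3$. The presence of the third vertex $i_2$ plays no role in this step; what matters is only that $\|i_3-i_1\| > \frac{k}{2}$, which guarantees that $i_1$ is an admissible target for a rewiring from $i_3$ (it lies outside $N(i_3)$ in the initial ring lattice and is distinct from $i_3$ itself).

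First I would enumerate, for each $d \in \{1,\ldots,\tfrac{k}{2}\}$, over the cases indexed by $j = 0,1,\ldots,d-1$, where $j$ counts how many of the earlier edges $\{i_3,i_3+1\},\ldots,\{i_3,i_3+(d-1)\}$ were rewired to some vertex \emph{other than} $i_1$ before the $d$-th edge is considered. The $j$-th case contributes a binomial factor $\binom{d-1}{j}(1-p)^{d-1-j}p^j$ (from selecting which of the earlier $d-1$ edges rewire, keeping or rewiring them accordingly), multiplied by a product of fractions of the form $\tfrac{n-k-r-l}{n-k-r-l+1}$ coming from the uniform random choice of targets avoiding $i_1$, its neighborhood, and the previously rewired vertices, and finally one factor $\tfrac{p}{n-k-(j+1)-l}$ for the actual rewiring of $\{i_3,i_3+d\}$ to $i_1$. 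Exactly as in Lemma $6.3$ the uniform-target fractions telescope so that every case yields the common factor $\tfrac{p}{n-k-1-l}$, and summing over $j$ via the binomial theorem gives
\[
\mathbb{P}(i_3 \xrightarrow{d} i_1 \mid l) \;=\; \frac{p}{n-k-1-l}\sum_{j=0}^{d-1}\binom{d-1}{j}(1-p)^{d-1-j}p^{j} \;=\; \frac{p}{n-k-1-l}.
\]

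Using the hypothesis $l \le \tfrac{n}{2}$ we have $n-k-1-l \ge \tfrac{n}{2}-k-1$, which is $\Theta(n)$ since $n \gg k$, yielding $\mathbb{P}(i_3 \xrightarrow{d} i_1 \mid l) = O_k(\tfrac{1}{n})$. The same argument, with $i_1$ replaced by $i_2$, is legitimate because $\|i_2-i_3\| > \tfrac{k}{2}$ makes $i_2$ an admissible target in exactly the same way; this gives $\mathbb{P}(i_3 \xrightarrow{d} i_2 \mid l) = O_k(\tfrac{1}{n})$.

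The only conceptual point that needs care is the verification that the second distinguished vertex does not shrink the count of admissible rewiring targets at any intermediate step: when computing $\mathbb{P}(i_3 \xrightarrow{d} i_1 \mid l)$ no event involving $i_2$ is being conditioned on, so $i_2$ is simply one of the $n-k-1-l-(\text{prior rewirings})$ uniform choices available for the intermediate rewirings, and the telescoping is undisturbed. I expect this bookkeeping — confirming that no hidden dependence on $i_2$ enters through the uniform-choice denominators — to be the only real obstacle, and it is resolved by the observation that the two events in the statement are handled one at a time, not jointly.
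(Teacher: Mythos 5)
Your proposal is correct and follows essentially the same route as the paper's proof: the same decomposition over how many of the first $d-1$ downside edges of $i_3$ rewire before edge $d$, the same binomial coefficients and telescoping uniform-choice fractions, and the same use of $l \le \frac{n}{2}$ to get $n-k-1-l \ge \frac{n}{2}-k-1 = \Theta(n)$. The one divergence is that the paper's own computation \emph{does} exclude $i_2$ from the admissible target sets at every step (its denominators read $n-k-2-l$, $n-k-3-l$, \dots, so the sum no longer collapses exactly to $\frac{p}{n-k-1-l}$ and is instead bounded term by term), whereas you argue $i_2$ is irrelevant to this marginal event; either bookkeeping gives $O_{k}(\frac{1}{n})$, so the conclusion is unaffected.
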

\begin{proof} In this proof, we compute only the case $i_3 \to i_1$. We will show that even though the permutation of vertices gives a different probability, each has the same bound of the probability. The rewiring $i_3 \to i_1$ can be done by a downside edge $d$ of $i_3$ neighbors for $d \in \{1,2,...,\frac{k}{2}\}$.
We start the proof with $d = 1,2$ to demonstrate the pattern of the general term $d$ that will be shown in the last part of this proof. Thus, 
\begin{itemize}
    \item d = 1; with the probability $p$, the edge $\{i_3,i_3+1\}$ is rewired. We know that no other edges on the downside of vertex $i_3$ get rewired yet. There exist $n-k-1-l$ choices (not vertex $i_3$, other $k$ neighbors, and $l$ previous rewirings) for uniformly choosing vertex $i_1$ at random. Since it can possibly rewire to vertex $i_2$, we must eliminate the option of choosing vertex $i_2$. We only have $n-k-2-l$ choices left. Hence,
    \begin{align*}
        \mathbb{P}(i_3 \xrightarrow{1} i_1 |\ l) &= \frac{p}{n-k-2-l}.
    \end{align*}
    \item 
    d = 2; we divide the computing to two cases, which are the edge $\{i_3,i_3+1\}$ is non-rewiring and was already rewired to some vertex not $i_1$.
    \begin{align*}
    \mathbb{P}(i_3 \xrightarrow{2} i_1 |\ l) &= \mathbb{P}(i_3 \xrightarrow{2} i_1, \text{but } \{i_3,i_3+1\} \ \mathrm{non-rewiring}) \\ &+ \mathbb{P}(i_3 \xrightarrow{2} i_1, \text{but } i_3+1 \not\to i_1).
    \end{align*}
    In the first case, an edge $\{i_3,i_3+1\}$ is no rewiring with the probability $1-p$. Then an edge $\{i_3,i_3+2\}$ gets rewired with the probability $p$, and there exist $n-k-2-l$ choices (not $i_3,i_2$, other $k$ neighbors, and $l$ previous rewirings) for uniformly choosing vertex $i_1$ at random. Second, an edge $\{i_3,i_3+1\}$ was already rewired to some vertex not $i_1$ with the rewiring probability $p$. It uniformly chooses some vertex \\ $j \in \{1,2,...,n\} \backslash \bigg(\{i_3-\frac{k}{2},...,i_3-1,i_3,i_3+1,...,i_3+\frac{k}{2}\} \cup \{i_1,i_2\} \bigg)$ at random with the probability $\frac{n-k-3}{n-k-1}$. Then, an edge $\{i_3,i_3+2\}$ rewires with the probability $p$, and there exist $n-k-3$ choices such that $$i_1 \in \{1,2,...,n\} \backslash \left(\{i_3-\frac{k}{2},...,i_3-1,i_3,i_3+1,...,i_3+\frac{k}{2}\} \cup \{i_2,j\} \right)$$ for uniformly choosing vertex $i_1$ at random. Thus,
   \begin{align*}
   \mathbb{P}(i_3 \xrightarrow{2} i_1 |\ l) &= \binom{1}{0}(1-p)\cdot \frac{p}{n-k-2-l} \\ &+ \binom{1}{1}\frac{(n-k-3-l)p}{n-k-1-l}\cdot \frac{p}{n-k-3-l} \\ &= \binom{1}{0}(1-p)\cdot \frac{p}{n-k-2-l}+ \binom{1}{1}\frac{p^2}{n-k-1-l}.
   \end{align*}
    Similarly, for the general $d \in \{1,2,...,\frac{k}{2}\}$. We assume that an edge $\{i_3,i_3+d\}$ gets rewired to $i_1$. It is divided into $d$ different possible cases. First, all previous $d-1$ edges $(\{i_3,i_3+1\}, \{i_3,i_3+2\},..., \{i_3,i_3+(d-1)\} )$ are non-rewiring with the probability $(1-p)^{d-1}$, and an edge $\{i_3,i_3 + d \}$ rewires with the probability $p$ and there exist $n-k-2-l$ choices (not $i_2$, other $k$ neighbors, and $l$ previous rewirings to $i_3$) to uniformly choose vertex $i_1$ at random. Then, we consider the case that the graph has only one previous edge from $\{ \{i_3,i_3+1\}, \{i_3,i_3+2\},..., \{i_3,i_3+(d-1)\} \}$ to be rewired to vertex not $i_1$. With the probability $p$, a given edge rewires and uniformly chooses a new vertex with the probability $\frac{n-k-3-l}{n-k-1-l}$ (not $i_1,i_2$, other $k$ neighbors, and $l$ previous rewirings to $i_3$). Then other $d-2$ remaining edges are non-rewiring with the probability $(1-p)^{d-2}$. Afterward, the last edge $\{i_3,i_3+d\}$ rewires with the probability $p$ and uniformly choosing vertex $i_1 \in \{1,2,...,n\} \backslash \left(\{i_3-\frac{k}{2},...,i_3-1,i_3,i_3+1,...,i_3+\frac{k}{2}\} \cup \{i_2\} \cup \mathbf{N}(i_3) \right)$ with $n-k-3-l$ choices. Another possible case is when there exist two previous edges from $\{ \{i_3,i_3+1\}, \{i_3,i_3+2\},..., \{i_3,i_3+(d-1)\} \}$ rewire to some vertices not $i_1$. We start with the first rewiring edge. There are $n-k-3-l$ choices (not $i_1,i_2$, other $k$ neighbors, and $l$ previous rewirings to $i_3$) out of $n-k-1-l$ for choosing vertex $v_1$. The second edge rewires and uniformly chooses vertex $v_2$ with $n-k-4-l$ choices (not $i_1, i_2, v_1$ , other $k$ neighbors, and $l$ previous rewirings to $i_3$) out of $n-k-2-l$. Finally, an edge $\{i_3,i_3+d\}$ gets rewired with the probability $p$ and uniformly choosing vertex $i_1 \in \{1,2,...,n\} \backslash \left(\{i_3-\frac{k}{2},...,i_3-1,i_3,i_3+1,...,i_3+\frac{k}{2} \} \cup \{i_2, v_1, v_2\} \right)$ with $n-k-4-l$ choices. We continue this computation until all $d$ rewiring edges are considered. For $l \le \frac{n}{2}$, it follows that
\end{itemize}
\begin{align*}
    &\mathbb{P}(i_3 \xrightarrow{d} i_1 |\ l) \\ &= \mathbb{P}(i_3 \xrightarrow{d} i_1, \text{but } \{i_3,i_3+1\},\{i_3,i_3+2\},...,\{i_3,i_3+(d-1)\} \text{ non-rewiring}) \\ &+ \mathbb{P}(i_3 \xrightarrow{d} i_1, \text{but only one edge rewires to not } i_1) \\ &+ \mathbb{P}(i_3 \xrightarrow{d} i_1, \text{but two edges rewire to not }i_1) + ... + \\ &+ \mathbb{P}(i_3 \xrightarrow{d} i_1, \text{but all } (d-2) \text{ edges rewire to not }i_1)\\ &+ \mathbb{P}(i_3 \xrightarrow{d} i_1, \text{but }\{i_3,i_3+1\},\{i_3,i_3+2\},...,\{i_3,i_3+(d-1)\} \text{ rewire to not } i_1) \\ &= \binom{d-1}{0}(1-p)^{d-1}\cdot \frac{p}{n-k-2-l} \\ &+ \binom{d-1}{1}(1-p)^{d-2}\cdot \frac{(n-k-3-l)p}{n-k-1-l}\cdot\frac{p}{n-k-3-l} \\ &+ \binom{d-1}{2}(1-p)^{d-3}\cdot\frac{(n-k-3-l)p}{n-k-1-l}\cdot\frac{(n-k-4-l)p}{n-k-2-l}\cdot\frac{p}{n-k-4-l} \\ &+ \binom{d-1}{3}(1-p)^{d-4}\cdot\frac{(n-k-3-l)p}{n-k-1-l}\cdot\frac{(n-k-4-l)p}{n-k-2-l}\cdot\frac{(n-k-5-l)p}{n-k-3-l} \cdot \\ &\cdot \frac{p}{n-k-5-l} +...+ \\ &+ \binom{d-1}{d-2}(1-p)\cdot\frac{(n-k-3-l)p}{n-k-1-l}\cdot\frac{(n-k-4-l)p}{n-k-2-l}\cdot \cdot \cdot \\ &\cdot \frac{(n-k-(d)-l)}{n-k-(d-2)-l}\cdot\frac{p}{n-k-(d)-l} \\ &+ \binom{d-1}{d-1}\frac{(n-k-3-l)p}{n-k-1-l}\cdot\frac{(n-k-4-l)p}{n-k-2-l}\cdot \cdot \cdot \\ &\cdot \frac{(n-k-(d+1)-l)}{n-k-(d-1)-l}\cdot \frac{p}{n-k-(d+1)-l} \\ &= \binom{d-1}{0}(1-p)^{d-1}\cdot \frac{p}{n-k-2-l} + \binom{d-1}{1}(1-p)^{d-2}\cdot \frac{p^2}{n-k-1-l} \\ &+ \sum_{i=2}^{d-1}\binom{d-1}{i}(1-p)^{d-1-i}\cdot \frac{[n-k-(i+1)-l]}{(n-k-1-l)(n-k-2-l)}\cdot p^{i+1}. \\ &\le \frac{1}{n-k-2-l} + \binom{d-1}{1}\frac{1}{n-k-1-l} + \left(\frac{1}{n-k-2-l}\right)\sum_{i=2}^{d-1}\binom{d-1}{i} \\ &= O_{k}(\frac{1}{n}).
\end{align*}
Since we know $||i_3-i_2|| > \frac{k}{2}$ and $i_2 < i_3$, it follows the same computation as the rewiring $i_3 \to i_1$. Thus, the probability  $\mathbb{P}(i_3 \xrightarrow{d} i_2 |\ l) = O_{k}(\frac{1}{n})$.
\end{proof} 
\begin{lem} 
Let $i_1,i_2,i_3$ be the vertices on the $SW(n,k,p)$ random graph which is $i_1 < i_2 < i_3$. The vertices must satisfy the condition $||i_3-i_1|| > \frac{k}{2}$ and $||i_3-i_2|| > \frac{k}{2}$. Then the probabilities $\mathbb{P}(i_3 \to i_1) = O_{k}(\frac{1}{n})$ and $\mathbb{P}(i_3 \to i_2) = O_{k}(\frac{1}{n})$.
\end{lem}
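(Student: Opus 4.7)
The plan is to mirror the proof of Lemma 6.4 almost verbatim, replacing the conditional bound from Lemma 6.3 with the one obtained in Lemma 6.7. The two assertions $\mathbb{P}(i_3 \to i_1) = O_k(1/n)$ and $\mathbb{P}(i_3 \to i_2) = O_k(1/n)$ will be handled in parallel since both targets are ``far'' from $i_3$, and Lemma 6.7 already supplied the conditional probability bound for each of them.

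First I would decompose
\[
\mathbb{P}(i_3 \to i_1) \;=\; \sum_{d=1}^{k/2} \mathbb{P}(i_3 \xrightarrow{d} i_1),
\]
so it suffices, after multiplying by the constant $k/2$, to bound a single $\mathbb{P}(i_3 \xrightarrow{d} i_1)$ by $O_k(1/n)$. Conditioning on the number $l$ of previous rewirings into $i_3$, I split
\[
\mathbb{P}(i_3 \xrightarrow{d} i_1) \;=\; \sum_{t=0}^{n/2} \mathbb{P}(i_3 \xrightarrow{d} i_1 \mid l=t)\,\mathbb{P}(l=t) \;+\; \sum_{t=n/2+1}^{n} \mathbb{P}(i_3 \xrightarrow{d} i_1 \mid l=t)\,\mathbb{P}(l=t).
\]
For the first sum, Lemma 6.7 gives $\mathbb{P}(i_3 \xrightarrow{d} i_1 \mid l=t) = O_k(1/n)$ for every $t \le n/2$, so the full sum is bounded by $O_k(1/n) \cdot \sum_t \mathbb{P}(l=t) \le O_k(1/n)$.

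The second sum is the tail term and is handled exactly as in the proof of Lemma 6.4. I would bound the conditional probability trivially by $1$ and estimate $\mathbb{P}(l = t)$ by the event that at least $n/4$ independent rewirings land on $i_3$ — each such rewiring has probability at most $c/n$ for some constant $c$ once we discard the (at most) $n/4$ upside neighbors of $i_3$ that cannot reach it. This yields
\[
\mathbb{P}(l=t) \;\le\; \binom{n}{t}\Bigl(\frac{c}{n}\Bigr)^{n/4},
\]
and summing over $n/2 < t \le n$ and applying the Binomial Theorem gives $2^n (c/n)^{n/4} = O_k(1/n^3)$, which is absorbed into $O_k(1/n)$. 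Summing over $d \in \{1,\dots,k/2\}$ then yields $\mathbb{P}(i_3 \to i_1) = O_k(1/n)$.

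Finally, the exact same argument, using the $i_3 \xrightarrow{d} i_2$ bound from Lemma 6.7 (which applies because $||i_3 - i_2|| > k/2$), gives $\mathbb{P}(i_3 \to i_2) = O_k(1/n)$. I do not anticipate a real obstacle here: the only delicate point is the tail estimate for $\mathbb{P}(l \ge n/2+1)$, and that estimate is already set up and justified in the proof of Lemma 6.4, so the work reduces to citing Lemmas 6.4 and 6.7 and reusing the splitting scheme.
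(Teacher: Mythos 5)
Your proposal follows exactly the same route as the paper's proof: decompose over the neighbor index $d$, condition on the number $l$ of prior rewirings into $i_3$, split the sum at $l = n/2$, apply the conditional bound of Lemma 6.7 on the main range, reuse the tail estimate $\sum_{t>n/2}\binom{n}{t}(c/n)^{n/4} = O_k(1/n^3)$ from Lemma 6.4, and finally sum over $d \in \{1,\dots,k/2\}$. The argument is correct and matches the paper's proof essentially verbatim.
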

\begin{proof}
In this proof, we will generally consider the computation of the probability of $i_3 \to i_1$. We use the same idea to show that the probability of the rewiring $i_3 \to i_2$ is also bounded by the same value. 
Let $0 \le l \le n$ be the number of all rewirings to vertex $i_1$ before $i_1$ is considered for the rewiring process. It follows the same computation with Lemma $6.4$. We have 
\begin{align*}
    \mathbb{P}(i_3 \xrightarrow{d} i_1) &= \sum_{t=0}^{n}\mathbb{P}(i_3 \xrightarrow{d} i_1 |\ l=t)\cdot \mathbb{P}(l=t) \\ &= \sum_{t=0}^{\frac{n}{2}}\mathbb{P}(i_3 \xrightarrow{d} i_1 |\ l=t)\cdot \mathbb{P}(l=t) + \sum_{t=\frac{n}{2}+1}^{n}\mathbb{P}(i_3 \xrightarrow{d} i_1 |\ l=t)\cdot \mathbb{P}(l=t) 
\end{align*}
By Lemma $6.7$, the probability $\mathbb{P}(i_3 \xrightarrow{d} i_1 |\ l=t) = O_{k}(\frac{1}{n})$, and it gives the term $\sum_{t=0}^{\frac{n}{2}}\mathbb{P}(i_3 \xrightarrow{d} i_1 |\ l=t)\cdot \mathbb{P}(l=t)$ is bounded by $O_{k}(\frac{1}{n})$. Since another term determines the case $l \ge \frac{n}{2}+1$, we have the term $\sum_{t=\frac{n}{2}+1}^{n}\mathbb{P}(i_3 \xrightarrow{d} i_1 |\ l=t)\cdot \mathbb{P}(l=t)$ has a bound $\mathbb{P}(l \ge \frac{n}{2}+1)$. Thus, by similar argument in Lemma $6.4$
\begin{align*}
    \mathbb{P}(i_3 \xrightarrow{d} i_1) &\le O_{k}(\frac{1}{n}) + \mathbb{P}(l \ge \frac{n}{2}+1) = O_{k}(\frac{1}{n}) + \sum_{t=\frac{n}{2}+1}^{n} \mathbb{P}(l=t) \\ &\le O_{k}(\frac{1}{n}) + \sum_{t=\frac{n}{2}+1}^{n} \binom{n}{t} \cdot \big(\frac{c}{n}\big)^{\frac{n}{4}} = O_{k}(\frac{1}{n}) + 2^{n} \cdot \big(\frac{c}{n}\big)^{\frac{n}{4}}, \mathrm{\ for\ some\ }c \\ &\le O_{k}(\frac{1}{n}) + O_{k}(\frac{1}{n^3}) \le O_{k}(\frac{1}{n}).
\end{align*}
For $1 \le d \le \frac{k}{2}$, it follows that
\begin{align*}
    \mathbb{P}(i_3 \to i_1) &= \sum_{d=1}^{\frac{k}{2}}\mathbb{P}(i_3 \xrightarrow{d} i_1) = (\frac{k}{2}) \cdot O_{k}(\frac{1}{n}) \le O_{k}(\frac{1}{n}).
\end{align*}
Since we have $||i_3-i_2|| > \frac{k}{2}$ and $i_2 < i_3$. The probability of the rewiring $i_3 \to i_2$ follows the same argument as $i_3 \to i_1$. Hence, the probability bound $\mathbb{P}(i_3 \to i_2) = O_{k}(\frac{1}{n})$.
\end{proof}
\begin{lem} For any distinct vertex $i_1,i_2,i_3$ in the $SW(n,k,p)$ random graph, those vertices satisfy the condition $||i_3-i_2|| > \frac{k}{2}$ and $||i_3-i_1|| > \frac{k}{2}$. Let $l \le \frac{n}{2}$ be the number of rewirings to vertex $i_3$ before this vertex is considered the rewiring process. Let $d \in \{1,2,...,\frac{k}{2}-1\}$ be fixed, and an edge $\{i_3,i_3+d\}$ is 
a downside edge of vertex $i_3$. For every $v = d+1,d+2,...,\frac{k}{2}$, then the following bound is true; $$\mathbb{P}(i_3 \xrightarrow{d} i_1, i_3 \xrightarrow{v} i_2 |\ l) = O_{k}(\frac{1}{n^2}).$$
\end{lem}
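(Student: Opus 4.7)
The plan is to extend the case-by-case counting argument already carried out in the proof of Lemma 6.7, now forcing two specific downside edges of $i_3$ to simultaneously rewire to two specific targets. The key intuition is that each of the two ``targeted'' rewirings $\{i_3,i_3+d\}\to i_1$ and $\{i_3,i_3+v\}\to i_2$ contributes an independent factor of order $1/n$ coming from the uniform choice among $\Theta(n)$ allowed vertices, and the other $\frac{k}{2}-2$ downside edges of $i_3$ contribute only a $k$-dependent constant. Together this yields the claimed $O_k(1/n^2)$ bound. Since we only need an upper bound, we can afford to be generous with all denominators.

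First I would enumerate the rewiring status of the other $\frac{k}{2}-2$ downside edges of $i_3$, namely those indexed by $s \in \{1,\ldots,\tfrac{k}{2}\} \setminus \{d,v\}$. Each such edge is in one of two states: either it rewires (with probability $p$) to some vertex different from $i_1,i_2$ and from all previously-forbidden vertices, or it does not rewire (with probability $1-p$). This partitions the joint event into at most $2^{k/2-2}$ sub-events, a number depending only on $k$. Within each sub-event, the algorithm considers the $\frac{k}{2}$ downside edges of $i_3$ in the fixed order $s=1,2,\ldots,\frac{k}{2}$, so the joint probability factorizes as a product of $\frac{k}{2}$ terms in that order.

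Next, for each fixed sub-event I would bound the corresponding product. The factor coming from $\{i_3,i_3+d\} \to i_1$ is $p$ times $\tfrac{1}{n-k-O_k(1)-l}$, since at the moment that edge is considered the forbidden set has size $k+O_k(1)+l$; the same bound applies to the factor $\{i_3,i_3+v\} \to i_2$, where the extra $O_k(1)$ merely absorbs the vertices chosen by preceding edges (including $i_1$, which has just entered $\mathbf{N}(i_3)$). Using the hypothesis $l\le \tfrac{n}{2}$, both factors are $O_k(1/n)$. All other factors in the product are at most $1$: the non-rewiring edges contribute $(1-p) \le 1$, and the rewiring edges that must avoid $\{i_1,i_2\}$ contribute $p \cdot \tfrac{n-k-O_k(1)-l}{n-k-1-l} \le 1$. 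Multiplying and summing over the at most $2^{k/2-2}$ sub-events gives
\[
\mathbb{P}\bigl(i_3 \xrightarrow{d} i_1,\ i_3 \xrightarrow{v} i_2 \,\big|\, l\bigr) \;\le\; 2^{k/2-2} \cdot O_k\!\left(\tfrac{1}{n}\right) \cdot O_k\!\left(\tfrac{1}{n}\right) \;=\; O_k\!\left(\tfrac{1}{n^2}\right).
\]

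The main obstacle, exactly as in the proof of Lemma 6.7, is the bookkeeping of the denominators across the many sub-events: each edge considered before $\{i_3,i_3+d\}$ or $\{i_3,i_3+v\}$ slightly shrinks the set of admissible targets, so the denominators depend on how many earlier edges already rewired. This is harmless for an upper bound because every denominator is of the form $n-k-O_k(1)-l \ge \tfrac{n}{2}-O_k(1)$, but it does require care in writing the product cleanly. One clean way is to absorb every such denominator into the constant hidden in $O_k(\cdot)$ from the outset, exactly as was done in Lemmas 6.3 and 6.7, and then only track which two factors are the ``targeted'' $O_k(1/n)$ contributions.
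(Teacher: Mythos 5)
Your proposal is correct and follows essentially the same route as the paper's proof: both condition on the rewire/non-rewire status of the non-targeted downside edges of $i_3$ (a number of cases depending only on $k$), extract one $O_k(1/n)$ factor from each of the two targeted uniform choices using $l \le \tfrac{n}{2}$, and bound all remaining factors by $1$. The paper merely organizes the same bookkeeping more explicitly, grouping sub-events by the count of rewired edges and collapsing the middle-edge configurations via the binomial theorem, whereas you bound each sub-event directly; the conclusion is identical.
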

\begin{proof}
Let $d \in \{1,2,...,\frac{k}{2}-1\}$. The downside edge $\{i_3, i_3+d\}$ rewires from $i_3$ to vertex $i_1$. Let $v > d$ be the next downside edge $\{i_3,i_3+v \}$ of vertex $i_3$ that rewires from $i_3$ to vertex $i_2$. Let $c$ be the number of edges between $\{i_3, i_3+d\}$ and $\{i_3,i_3+v\}$. Hence, 
\begin{align*}
    &\mathbb{P}(i_3 \xrightarrow{d} i_1, i_3 \xrightarrow{d+1} i_2\ |\ l) \\ &= \mathbb{P}(\mathrm{all \ } d-1 \mathrm{\ edges \ do\ not \ rewire}) +  \mathbb{P}(\mathrm{only \ one \ edge \ rewires \ to \ not \ }i_1,i_2) \\ &+ \mathbb{P}(\mathrm{two \ edges \ rewire \ to \ not \ }i_1,i_2) + \mathbb{P}(\mathrm{three \ edges \ rewire \ to \ not \ }i_1,i_2) \\ &+...+ \mathbb{P}(\mathrm{all \ } d-1 \mathrm{\ edges \ rewire \ to \ not \ }i_1,i_2) \\ &= (1-p)^{d-1}\left(\frac{p}{n-k-2-l}\right)^2 \\ &+ \binom{d-1}{1}(1-p)^{d-2}\cdot\frac{(n-k-3-l)p}{n-k-1-l}\left(\frac{p}{n-k-3-l}\right)^2 \\ &+
    \binom{d-1}{2}(1-p)^{d-3}\cdot\frac{(n-k-3-l)p}{n-k-1-l}\cdot\frac{(n-k-4-l)p}{n-k-2-l}\left(\frac{p}{n-k-4-l}\right)^2 \\ &+ \binom{d-1}{3}(1-p)^{d-4}\cdot\frac{(n-k-3-l)p}{n-k-1-l}\cdot\frac{(n-k-4-l)p}{n-k-2-l}\cdot\frac{(n-k-5-l)p}{n-k-3-l} \\ &\cdot \left(\frac{p}{n-k-5-l}\right)^2 +...+ \\ &+ \binom{d-1}{d-1}\frac{(n-k-3-l)p}{n-k-1-l}\cdot \cdot \cdot \frac{[n-k-(d+1)-l]p}{[n-k-(d-1)-l]}\left(\frac{p}{[n-k-(d+1)-l]}\right)^2 \\ &= (1-p)^{d-1}\left(\frac{p}{n-k-2-l}\right)^2 \\ &+ \binom{d-1}{1}(1-p)^{d-2}\cdot\frac{p^3}{(n-k-1-l)(n-k-3-l)} \\ &+ \sum_{i=2}^{d-1} \binom{d-1}{i}(1-p)^{d-1-i}\frac{[n-k-(i+1)-l]p^{2+i}}{(n-k-1-l)(n-k-2-l)[n-k-(2+i)-l]} \\ &\le \left(\frac{1}{n-k-2-l}\right)^2 + (d-1)\left(\frac{1}{n-k-2-l}\right)^2 \\ &+ \left(\frac{1}{n-k-2-l}\right)^2 \left(\sum_{i=2}^{d-1} \binom{d-1}{i}\right) \\ &= O_{k}(\frac{1}{n^2}).
\end{align*}
Then we show the case for any $d < v \le \frac{k}{2}$ and $0 \le c = v-d-1$. It is divided into sub-cases of the number of edges from $d-1$ edges $\{i_3,i_3+1\},...,\{i_3,i_3+(d-1)\}$. Let $m \in \{0,1,2,...,d-1\}$ be the number of the previous $d-1$ edges that already rewire. In the computation, we fix the number $m$ and consider every rewiring condition of other $c$ edges. Let the notation $\mathbb{P}(m,c\ |\ l)$ be the probability of rewiring $i_3 \xrightarrow{d} i_1, i_3 \xrightarrow{v} i_2$ which some $m \le d-1$ downside neighbors of vertex $i_3$ already rewired and some $c$ downside neighbors of vertex $i_3$ also rewired. This probability is conditioning on $l$ upside vertices rewired to vertex $i_3$. It follows that 
\begin{align*}
    \mathbb{P}(0,c\ |\ l) &= \binom{d-1}{0}(1-p)^{d-1}\cdot\frac{p}{n-k-2-l}\Bigg[\binom{c}{0}(1-p)^c\cdot\frac{p}{n-k-2-l} \\ &+ \binom{c}{1}(1-p)^{c-1}\cdot\frac{(n-k-3-l)p}{n-k-2-l}\cdot\frac{p}{n-k-3-l} +...+ \\ &+ \binom{c}{c-1}(1-p)\cdot\frac{(n-k-3-l)p}{n-k-2-l}\cdot \cdot \cdot\frac{[n-k-(c+1)-l]p}{n-k-c-l}\\ &\cdot\frac{p}{n-k-(c+1)-l} + \binom{c}{c}\frac{(n-k-3-l)p}{n-k-2-l}\cdot\frac{(n-k-4-l)p}{n-k-3-l}\cdot \cdot \cdot \\ &\cdot \frac{[n-k-(c+2)-l]p}{n-k-(c+1)-l}\cdot\frac{p}{n-k-(c+2)-l} \Bigg]\\ &= \binom{d-1}{0}(1-p)^{d-1}\cdot\frac{p}{n-k-2-l}\left(\frac{p}{n-k-2-l}\right)\cdot \\ &\cdot \left(\sum_{i=1}^{c}\binom{c}{i}(1-p)^{c-i}p^i\right).
\end{align*}
By the binomial theorem, $$\sum_{i=1}^{c}\binom{c}{i}(1-p)^{c-i}\cdot p^i = [(1-p)+p]^c = 1^c = 1.$$ 
Hence, we have
\begin{align*}
    \mathbb{P}(0,c\ |\ l) &= \binom{d-1}{0}(1-p)^{d-1}\left(\frac{p}{n-k-2-l}\right)^2 \le O_{k}(\frac{1}{n^2}).
\end{align*}
Similarly, 
\begin{align*}
    \mathbb{P}(1,c\ |\ l) &= \binom{d-1}{1}(1-p)^{d-2}\cdot\frac{(n-k-3-l)p}{n-k-1-l}\cdot\frac{p}{n-k-3-l} \cdot \\ &\Bigg[\binom{c}{0}(1-p)^c\cdot\frac{p}{n-k-3-l} + \binom{c}{1}(1-p)^{c-1}\cdot\frac{(n-k-4-l)p}{n-k-3-l}\cdot \\ &\cdot \frac{p}{n-k-4-l} +...+ \binom{c}{c-1}(1-p)\cdot\frac{(n-k-4-l)p}{n-k-3-l}\cdot \cdot \cdot \\ &\cdot \frac{[n-k-(c+2)-l]p}{n-k-(c+1)-l}\cdot\frac{p}{n-k-(c+2)-l} \\ &+ \binom{c}{c}\frac{(n-k-4-l)p}{n-k-3-l}\cdot\frac{(n-k-5-l)p}{n-k-4-l}\cdot \cdot \cdot \frac{[n-k-(c+3)-l]p}{n-k-(c+2)-l} \cdot \\ &\cdot \frac{p}{n-k-(c+3)-l} \Bigg] \\ &= \binom{d-1}{1}(1-p)^{d-2}\cdot\frac{p^2}{n-k-1-l}\left(\frac{p}{n-k-3-l}\right)\left(\sum_{i=1}^{c}\binom{c}{i}(1-p)^{c-i}p^i\right) \\ &= \binom{d-1}{0}(1-p)^{d-1}\left(\frac{p^3}{(n-k-1-l)(n-k-3-l)}\right) \le O_{k}(\frac{1}{n^2}).
\end{align*}
For $d-1 \ge m \ge 2$, it follows that
\begin{align*}
    &\mathbb{P}(m,c\ |\ l) \\ &= \binom{d-1}{m}(1-p)^{d-1-m}
    \frac{(n-k-3-l)p}{n-k-1-l}\frac{(n-k-4-l)p}{n-k-2-l}\cdot \cdot \cdot \\ &\cdot\frac{[n-k-(m+2)-l]p}{n-k-m-l} \cdot \frac{p}{n-k-(m+2)-l} \Bigg[\binom{c}{0}(1-p)^c\frac{p}{n-k-(m+2)-l} \\ &+ \binom{c}{1}(1-p)^{c-1}\frac{[n-k-(m+3)-l]p}{n-k-(m+2)-l}\cdot\frac{p}{n-k-(m+3)-l} +...+ \\ &+ \binom{c}{c-1}(1-p)\cdot\frac{[n-k-(m+3)-l]p}{n-k-(m+2)-l}\cdot \cdot \cdot\frac{[n-k-(m+c+2)-l]p}{n-k-(m+c+1)-l}\cdot \\ &\cdot\frac{p}{n-k-(m+c+2)-l} + \binom{c}{c}\frac{[n-k-(m+3)-l]p}{n-k-(m+2)-l}\cdot \cdot \cdot \\ &\cdot \frac{[n-k-(m+c+3)-l]p}{n-k-(m+c+2)-l}\cdot\frac{p}{n-k-(m+c+3)-l} \Bigg] \\ &= \binom{d-1}{m}(1-p)^{d-1-m}\cdot\frac{[n-k-(m+1)-l]p^{m+1}}{(n-k-1-l)(n-k-2-l)} \cdot \\ &\left[\sum_{i=1}^{c}\binom{c}{i}(1-p)^{c-i}p^i\left(\frac{p}{n-k-(m+2)-l}\right)\right] \\ &= \binom{d-1}{m}(1-p)^{d-1-m}\cdot\frac{[n-k-(m+1)-l]p^{m+2}}{(n-k-1-l)(n-k-2-l)[n-k-(m+2)-l]} \\ &\le O_{k}(\frac{1}{n^2}).
\end{align*}
Therefore, we have the probability bound
$$\mathbb{P}(i_3 \xrightarrow{d} i_1, i_3 \xrightarrow{v} i_2 |\ l) =  \mathbb{P}(0,c |\ l) +  \mathbb{P}(1,c |\ l) + \sum_{m=2}^{d-1}\mathbb{P}(m,c |\ l) = O_{k}(\frac{1}{n^2}).$$
\end{proof}
\begin{lem}
    Let $i_1,i_2,i_3$ be the distinct vertices in the $SW(n,k,p)$ random graph. Those vertices must satisfy the condition $||i_3-i_1|| > \frac{k}{2}$ and $||i_3-i_2|| > \frac{k}{2}$. Then the probability $$\mathbb{P}(i_3 \to i_1, i_3 \to i_2) = O_{k}(\frac{1}{n^2}).$$
\end{lem}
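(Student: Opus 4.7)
The plan is to follow the same condition-on-$l$ / tail-bound template used in Lemmas 6.4 and 6.8, replacing the single-rewiring ingredient (Lemma 6.7) with the joint-rewiring estimate just proved in Lemma 6.9. Concretely, let $l$ be the number of rewirings directed to vertex $i_3$ before $i_3$'s turn in the algorithm. By the law of total probability,
\begin{align*}
\mathbb{P}(i_3 \to i_1,\, i_3 \to i_2) &= \sum_{t=0}^{n}\mathbb{P}(i_3 \to i_1,\, i_3 \to i_2 \mid l=t)\cdot \mathbb{P}(l=t) \\
&= \sum_{t=0}^{n/2}(\cdots) + \sum_{t=n/2+1}^{n}(\cdots).
\end{align*}

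First I would handle the bulk range $0 \le t \le n/2$. The event $\{i_3 \to i_1,\, i_3 \to i_2\}$ decomposes over which two downside edges of $i_3$ get rewired to $i_1$ and to $i_2$, respectively, i.e.\ into the disjoint union of the events $\{i_3 \xrightarrow{d} i_1,\, i_3 \xrightarrow{v} i_2\}$ over ordered pairs $(d,v)$ with $d,v \in \{1,\ldots,\tfrac{k}{2}\}$, $d \neq v$. The case $d<v$ is exactly what Lemma 6.9 controls, and the case $d>v$ is symmetric (just swap the roles of $i_1$ and $i_2$, noting that both $\|i_3-i_1\|,\|i_3-i_2\|>\tfrac{k}{2}$, so Lemma 6.9 applies verbatim). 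Hence for each of the at most $\binom{k/2}{2}\cdot 2 = O(k^2)$ ordered pairs, the conditional probability is $O_k(1/n^2)$, and summing gives
\[
\sum_{t=0}^{n/2}\mathbb{P}(i_3 \to i_1,\, i_3 \to i_2 \mid l=t)\cdot \mathbb{P}(l=t) \;\le\; O_k\!\left(\tfrac{1}{n^2}\right)\sum_{t=0}^{n/2}\mathbb{P}(l=t) \;\le\; O_k\!\left(\tfrac{1}{n^2}\right).
\]

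For the tail $t \ge n/2+1$, I would bound the conditional probability trivially by $1$ and reuse the tail estimate from Lemma 6.4: ignoring the $n/4$ upside neighbors of $i_3$, each of at least $n/4$ independent potential rewirings to $i_3$ contributes a factor bounded by $c/n$, so $\mathbb{P}(l=t) \le \binom{n}{t}(c/n)^{n/4}$, and by the Binomial Theorem
\[
\sum_{t=n/2+1}^{n}\mathbb{P}(l=t) \;\le\; 2^n \cdot \left(\tfrac{c}{n}\right)^{n/4} \;=\; O_k\!\left(\tfrac{1}{n^3}\right),
\]
which is absorbed into $O_k(1/n^2)$. Combining the two pieces yields $\mathbb{P}(i_3 \to i_1,\, i_3 \to i_2) = O_k(1/n^2)$.

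The routine parts are the enumeration over $(d,v)$ and the tail bound, which are essentially copied from Lemmas 6.4 and 6.8. The only conceptual point worth flagging is that Lemma 6.9 is written only for the ordering $d<v$ with the assignment $(d,v)\mapsto(i_1,i_2)$; I expect the main (minor) obstacle is just to verify that the symmetric case $d>v$ is genuinely covered by swapping the labels of $i_1$ and $i_2$, which is legitimate here because the hypotheses of Lemma 6.9 are symmetric in the two ``far'' targets.
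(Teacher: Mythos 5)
Your proposal is correct and follows essentially the same route as the paper: condition on the number $l$ of prior rewirings to $i_3$, apply Lemma 6.9 on the range $l \le n/2$, dispose of the tail $l > n/2$ with the $\binom{n}{t}(c/n)^{n/4}$ estimate from Lemma 6.4, and then sum over the $O(k^2)$ choices of which two downside edges of $i_3$ land on $i_1$ and $i_2$. The paper writes this last sum as $\sum_{d<v}\bigl[\mathbb{P}(i_3 \xrightarrow{d} i_1, i_3 \xrightarrow{v} i_2) + \mathbb{P}(i_3 \xrightarrow{d} i_2, i_3 \xrightarrow{v} i_1)\bigr]$ and bounds it by a max, which is exactly the ordered-pair symmetry point you flag.
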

\begin{proof} This proof follows almost immediately from Lemma $6.9$. Let $0 \le l \le n$ be the number of all rewirings to vertex $i_3$ before this vertex is considered for the rewiring process. It follows the same computation with Lemmas $6.4$ and $6.8$ when we compute the conditional probability given $l$. We have
\begin{align*}
    \mathbb{P}(i_3 \xrightarrow{d} i_1, i_3 \xrightarrow{v} i_2) &= \sum_{t=0}^{n}\mathbb{P}(i_3 \xrightarrow{d} i_1, i_3 \xrightarrow{v} i_2 |\ l=t) \cdot \mathbb{P}(l=t) \\ &= \sum_{t=0}^{\frac{n}{2}}\mathbb{P}(i_3 \xrightarrow{d} i_1, i_3 \xrightarrow{v} i_2 |\ l=t) \cdot \mathbb{P}(l=t) \\ &+ \sum_{t=\frac{n}{2}+1}^{n}\mathbb{P}(i_3 \xrightarrow{d} i_1, i_3 \xrightarrow{v} i_2 |\ l=t) \cdot \mathbb{P}(l=t) \\ &\le O_{k}(\frac{1}{n^2}) + \mathbb{P}(l \ge \frac{n}{2}+1),
\end{align*}
This inequality holds because Lemma $6.9$ tells us that $\mathbb{P}(i_3 \xrightarrow{d} i_1, i_3 \xrightarrow{v} i_2 |\ l=t) = O_{k}(\frac{1}{n^2})$. It makes the term $\sum_{t=0}^{\frac{n}{2}}\mathbb{P}(i_3 \xrightarrow{d} i_1, i_3 \xrightarrow{v} i_2 |\ l=t) \cdot \mathbb{P}(l=t)$ is bounded by $O_{k}(\frac{1}{n^2})$. For $l \ge \frac{n}{2}+1$, the term $\sum_{t=\frac{n}{2}+1}^{n}\mathbb{P}(i_3 \xrightarrow{d} i_1, i_3 \xrightarrow{v} i_2 |\ l=t) \cdot \mathbb{P}(l=t)$ is bounded by $\mathbb{P}(l \ge \frac{n}{2}+1)$. Hence, by  similar argument in Lemma $6.4$
\begin{align*}
    \mathbb{P}(i_3 \xrightarrow{d} i_1, i_3 \xrightarrow{v} i_2) &\le O_{k}(\frac{1}{n^2}) + \sum_{t=\frac{n}{2}+1}^{n} \binom{n}{t} \cdot \big(\frac{c}{n}\big)^{\frac{n}{4}}, \mathrm{\ for\ some\ }c \\ &= O_{k}(\frac{1}{n^2}) + 2^n \cdot \big(\frac{c}{n}\big)^{\frac{n}{4}} \\ &\le O_{k}(\frac{1}{n^2}) + O_{k}(\frac{1}{n^3}) \le O_{k}(\frac{1}{n^2}).
\end{align*}
For $d \in \{1,2,...,\frac{k}{2}\}$ and $v = d+1,d+2,...,\frac{k}{2}$, it follows that
\begin{align*}
    \mathbb{P}(i_3 \to i_1, i_3 \to i_2) &= \sum_{d=1}^{\frac{k}{2}-1}\sum_{v=d+1}^{\frac{k}{2}}\left[\mathbb{P}(i_3 \xrightarrow{d} i_1, i_3 \xrightarrow{v} i_2) + \mathbb{P}(i_3 \xrightarrow{d} i_2, i_3 \xrightarrow{v} i_1) \right]\\ &\le \sum_{d=1}^{\frac{k}{2}-1}\sum_{v=d+1}^{\frac{k}{2}} 2.\max\{\mathbb{P}(i_3 \xrightarrow{d} i_1, i_3 \xrightarrow{v} i_2), \mathbb{P}(i_3 \xrightarrow{d} i_2, i_3 \xrightarrow{v} i_1)\}\\ &\le \sum_{d=1}^{\frac{k}{2}-1}\sum_{v=d+1}^{\frac{k}{2}} O_{k}(\frac{1}{n^2}) \\ &\le O_{k}(\frac{1}{n^2}).
\end{align*}
\end{proof}
\begin{lem} Given the case \textbf{two far} configuration. Let $1 \le i_1 < i_2 < i_3 \le n$ be distinct vertices in the $SW(n,k,p)$ random graph. The vertices satisfy one of the following three conditions;
\begin{enumerate}
    \item  $||i_1-i_2|| \le \frac{k}{2},||i_2-i_3|| > \frac{k}{2},||i_3-i_1|| > \frac{k}{2}$
    \item  $||i_1-i_2|| > \frac{k}{2},||i_2-i_3|| \le \frac{k}{2},||i_3-i_1|| > \frac{k}{2}$
    \item  $||i_1-i_2|| > \frac{k}{2},||i_2-i_3|| > \frac{k}{2},||i_3-i_1|| \le \frac{k}{2}$
\end{enumerate}
Then the probability $\mathbf{P}_3 = O_{k}(\frac{1}{n^2})$.
\end{lem}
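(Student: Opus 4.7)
The plan is to focus on condition (1), where $\{i_1,i_2\}$ is the unique close edge and both $\{i_2,i_3\}$ and $\{i_3,i_1\}$ are far; the other two conditions follow by a cyclic relabeling of $i_1,i_2,i_3$ that places the common endpoint of the two far edges in the distinguished role played here by $i_3$. Under condition (1), I first drop the close-edge constraint via
$$\mathbb{P}(A_{i_1i_2}=1,\, A_{i_2i_3}=1,\, A_{i_3i_1}=1) \;\le\; \mathbb{P}(A_{i_2i_3}=1,\, A_{i_3i_1}=1),$$
and then split the right-hand side according to how each far edge is created. Each far edge can only arise from a rewiring at one of its two endpoints, so the event $\{A_{i_2i_3}=1,\, A_{i_3i_1}=1\}$ decomposes into four exhaustive cases: (a) $i_1 \to i_3$ and $i_2 \to i_3$, (b) $i_1 \to i_3$ and $i_3 \to i_2$, (c) $i_2 \to i_3$ and $i_3 \to i_1$, and (d) $i_3 \to i_1$ and $i_3 \to i_2$.

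Case (d) is handled directly by Lemma 6.10, giving $O_k(1/n^2)$. For case (a) the two rewirings happen at the distinct upside vertices $i_1$ and $i_2$, so I would condition on the history seen at the later vertex $i_2$ to factor the joint probability and then apply Lemma 6.4 to each factor, obtaining $O_k(1/n)\cdot O_k(1/n) = O_k(1/n^2)$. Cases (b) and (c) are hybrid, combining one rewiring at an upside vertex with one rewiring at $i_3$. I would bound them by the $l$-split argument of Lemma 6.8: condition on the number $l$ of edges rewired into $i_3$ before $i_3$ is processed and split into $l\le n/2$ and $l>n/2$. For $l\le n/2$, Lemma 6.7 gives the $i_3$-side rewiring conditional probability $O_k(1/n)$, while Lemma 6.4 gives the upside rewiring probability $O_k(1/n)$, so the joint contribution is $O_k(1/n^2)$; for $l>n/2$, the binomial tail estimate used inside the proof of Lemma 6.4 contributes $O_k(1/n^3)$. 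Summing over the four cases yields $\mathbf{P}_{3,1}=O_k(1/n^2)$, and by the cyclic relabeling mentioned above together with Lemmas 6.7, 6.8, and 6.10 applied in the permuted roles, conditions (2) and (3) give the same bound, so $\mathbf{P}_3 = \max\{\mathbf{P}_{3,1},\mathbf{P}_{3,2},\mathbf{P}_{3,3}\} = O_k(1/n^2)$.

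The hard part will be cases (b) and (c), where the two rewirings occur at different stages of the algorithm and are not manifestly independent: conditioning on $i_1\to i_3$ (resp.\ $i_2\to i_3$) alters the history $i_3$ sees when it is processed. The saving point is that this conditioning only increments $l$ by one, and the conditional bound $\mathbb{P}(i_3 \xrightarrow{d} i_2 \mid l) = O_k(1/n)$ from Lemma 6.7 is uniform in $l$ throughout the range $l\le n/2$, so the $l$-split argument from Lemma 6.8 carries through and yields the required product of two $O_k(1/n)$ factors.
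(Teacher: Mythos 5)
Your proposal is correct and follows essentially the same route as the paper: the same four-way decomposition of how the two far edges can arise, Lemma 6.10 for the double rewiring at $i_3$, products of $O_k(\frac{1}{n})$ bounds from Lemmas 6.4/6.7/6.8 for the remaining cases, and symmetry for conditions (2) and (3). Your only deviations are cosmetic improvements: you discard the close-edge factor up front (the paper keeps the $(1-p)$ factors and invokes Lemma 6.5), and you are more explicit than the paper about why the two rewirings in the mixed cases can be treated as independent, via the uniform-in-$l$ bound and the $l$-split from Lemma 6.8.
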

\begin{proof}
Let $\mathbf{P}_{3,i}$ be a maximum probability of the above condition $i$ for \textbf{two far} configuration. We will prove the probability bound  $\mathbf{P}_{3,1}$ and then use the result to come up with others probabilities $\mathbf{P}_{3,2}$ and $\mathbf{P}_{3,3}$. First, we choose the vertices $i_1,i_2,i_3$ that satisfy $||i_1-i_2|| \le \frac{k}{2},||i_2-i_3|| > \frac{k}{2},||i_3-i_1|| > \frac{k}{2}$. We assume that those vertices give a maximum probability of the first condition for \textbf{two far}configuration.
To have three connected vertices, an edges $\{i_1,i_2\}$ does not rewire and two other edges  $\{i_2,i_3\}, \{i_3,i_1\}$ are rewired. The construction of two rewiring edges can occur with four possibilities. \\ (1.) Rewiring $i_1 \to i_3$ and $i_2 \to i_3$. (2.) Rewiring $i_1 \to i_3$ and $i_3 \to i_2$. \\ (3.) Rewiring $i_3 \to i_1$ and $i_2 \to i_3$. (4.) Rewiring $i_3 \to i_1$ and $i_3 \to i_2$. \\ We have the following
\begin{align*}
    \mathbf{P}_{3,1} &= \mathbb{P}(A_{i_1i_2}=1,A_{i_2i_3}=1,A_{i_3i_1}=1) \\ &= \mathbb{P}(A_{i_1i_2}=1, i_1 \to i_3, i_2 \to i_3) + \mathbb{P}(A_{i_1i_2}=1, i_1 \to i_3, i_3 \to i_2) \\ &+ \mathbb{P}(A_{i_1i_2}=1, i_3 \to i_1, i_2 \to i_3) + \mathbb{P}(A_{i_1i_2}=1, i_3 \to i_1, i_3 \to i_2) \\ &= \mathbb{P}(A_{i_1,i_2}=1)\cdot \mathbb{P}(i_1 \to i_3 | A_{i_1,i_2}=1)\cdot \mathbb{P}(i_2 \to i_3) \\ &+ \mathbb{P}(A_{i_1,i_2}=1)\cdot \mathbb{P}(i_1 \to i_3 | A_{i_1,i_2}=1)\cdot \mathbb{P}(i_3 \to i_2) \\ &+ \mathbb{P}(A_{i_1,i_2}=1)\cdot \mathbb{P}(i_2 \to i_3)\cdot \mathbb{P}(i_3 \to i_1) \\ &+ \mathbb{P}(A_{i_1,i_2}=1)\cdot \mathbb{P}(i_3 \to i_1, i_3 \to i_2).
\end{align*}
By Lemma $6.4$, since we know that $||i_2-i_3|| > \frac{k}{2}$, we have the probability $\mathbb{P}(i_2 \to i_3) = O_{k}(\frac{1}{n})$. We also know $||i_1-i_2|| \le \frac{k}{2}$ and $||i_1-i_3|| > \frac{k}{2}$. Lemma $6.5$ tells us the probability $\mathbb{P}(i_1 \to i_3 | \ A_{i_1i_2}=1) = O_{k}(\frac{1}{n})$. In addition, we use the result from Lemma $6.8$  that the probability $\mathbb{P}(i_3 \to i_1) = O_{k}(\frac{1}{n})$ and $\mathbb{P}(i_3 \to i_2) = O_{k}(\frac{1}{n})$ because of $||i_3-i_1|| > \frac{k}{2}$ and $||i_3-i_2|| > \frac{k}{2}$. We plug in the results from above and Lemma $6.10$ to the formula of the bound of $\mathbf{P}_{3,1}$. Thus,
\begin{align*}
    \mathbf{P}_{3,1} &\le (1-p)O_{k}(\frac{1}{n})O_{k}(\frac{1}{n}) + (1-p)O_{k}(\frac{1}{n})O_{k}(\frac{1}{n}) + (1-p)O_{k}(\frac{1}{n})O_{k}(\frac{1}{n}) \\ &+ (1-p)O_{k}(\frac{1}{n^2}) \\ &\le O_{k}(\frac{1}{n^2}). 
\end{align*}
The other two conditions have the similar construction of three connected edges, which are two far edges and one close edge. We follows the same computation as the first condition. It gives us the same bound of the probabilities
\begin{align*}
\mathbf{P}_{3,2} &= \max\{\mathbb{P}(A_{i_1i_2}= 1, A_{i_2i_3} = 1, A_{i_3i_1} = 1)\ | \ i_1,i_2,i_3 \mathrm{\ satisfy\ second\ condition}\} \\ &= O_{k}(\frac{1}{n^2}),
\end{align*}
\begin{align*}
\mathbf{P}_{3,3} &= \max\{\mathbb{P}(A_{i_1i_2}= 1, A_{i_2i_3} = 1, A_{i_3i_1} = 1)\ | \ i_1,i_2,i_3 \mathrm{\ satisfy\ third\ condition}\} \\ &= O_{k}(\frac{1}{n^2}).
\end{align*}
Since all three probabilities have the same bound, the probability of the case \textbf{two far} configuration is $\mathbf{P}_{3} = \max\{\mathbf{P}_{3,1}, \mathbf{P}_{3,2}, \mathbf{P}_{3,3}\} = O_{k}(\frac{1}{n^2})$.
\end{proof}
\indent Then we compute the probability of the case \textbf{all far} configuration for distinct vertices $i_1,i_2,i_3$ in the $SW(n,k,p)$ random graph. We assume that three vertices are located in order $1 \le i_1 < i_2 < i_3 \le n$ and satisfy the condition $||i_1-i_2|| > \frac{k}{2},||i_2-i_3|| > \frac{k}{2},||i_3-i_1|| > \frac{k}{2}$.
\begin{lem} The probability $\mathbf{P}_4 = O_{k}(\frac{1}{n^3})$.
\end{lem}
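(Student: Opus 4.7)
The plan is to enumerate the ways three ``all far'' edges can arise and bound the probability of each way using the single- and joint-rewiring bounds already established.

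Since every pair among $i_1,i_2,i_3$ is at torus distance strictly greater than $\frac{k}{2}$, none of the edges $\{i_1,i_2\}$, $\{i_2,i_3\}$, $\{i_3,i_1\}$ belongs to the initial ring lattice. Thus each of the three edges must be created by a rewiring, and each rewiring has one of two possible source vertices (the two endpoints of the edge). This gives at most $2^3=8$ sub-cases to analyze. I would first dispose of these by grouping according to the pattern of source vertices.

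In each sub-case I would write $\mathbf{P}_4$ as a product (or a bound by a product) of rewiring probabilities. If the three rewirings issue from three distinct vertices (for example $i_1\to i_2$, $i_2\to i_3$, $i_3\to i_1$, which gives a cyclic pattern, and similarly for the reverse cyclic pattern), the events are governed by disjoint rewiring steps of the algorithm, so they can be bounded by the product of three single-rewiring probabilities. By Lemma $6.8$ (or equivalently Lemma $6.4$ in the unconditional form) each of these is $O_k(\tfrac{1}{n})$, giving a bound of $O_k(\tfrac{1}{n^3})$. If two of the three rewirings share a source vertex (e.g. $i_3\to i_1$ and $i_3\to i_2$ together with some rewiring producing $\{i_1,i_2\}$), I would invoke Lemma $6.10$ to bound the joint probability of the two rewirings out of the shared vertex by $O_k(\tfrac{1}{n^2})$, and then multiply by the single-rewiring probability $O_k(\tfrac{1}{n})$ for the third edge (which involves a different source vertex and is handled by Lemma $6.8$). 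The case of three rewirings out of a single vertex is impossible, since the three edges involve only two distinct vertices per edge and three edges cannot all share the same source.

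After assembling bounds for each sub-case, the total probability is a finite (depending only on $k$) sum of terms each of size $O_k(\tfrac{1}{n^3})$, so $\mathbf{P}_4=O_k(\tfrac{1}{n^3})$ as claimed. The main technical obstacle I anticipate is bookkeeping: making sure that in each sub-case the rewiring events factor cleanly (or can be bounded by such a factorization) and that all configurations of source vertices are actually covered by the existing lemmas, especially the ``mixed'' sub-cases where one rewiring is conditioned on the outcome of earlier rewirings. For those I would condition on the number $l$ of previous rewirings into the relevant vertex and use the same decomposition as in Lemmas $6.4$ and $6.8$ (splitting $0\le l\le \tfrac{n}{2}$ versus $l>\tfrac{n}{2}$ and using the binomial tail bound to absorb the latter).
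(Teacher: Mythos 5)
Your proposal matches the paper's proof essentially step for step: the paper likewise enumerates the $2^3=8$ choices of source vertex for the three rewired edges, bounds the two cyclic patterns by a product of three single-rewiring probabilities $O_k(\frac{1}{n})$ from Lemma $6.8$, and bounds the remaining six patterns (where one vertex sources two rewirings) by $O_k(\frac{1}{n^2})\cdot O_k(\frac{1}{n})$ using Lemma $6.10$, summing to $O_k(\frac{1}{n^3})$. Your observation that all three rewirings cannot issue from a single vertex is a small extra remark not made explicitly in the paper, but the argument is otherwise the same.
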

\begin{proof} 
Let $i_1,i_2,i_3$ be distinct vertices in $SW(n,k,p)$ random graph. The vertices satisfy the above condition $||i_1-i_2|| > \frac{k}{2},||i_2-i_3|| > \frac{k}{2},||i_3-i_1|| > \frac{k}{2}$. We assume that the chosen vertices give a maximum probability of \textbf{all far} configuration.
There are eight possibilities of rewiring to have $\{i_1,i_2\}, \{i_2,i_3\}, \{i_3,i_1\}$ and all are far edges: \\
(1.) Rewiring $i_1 \to i_2, i_2 \to i_3, i_3 \to i_1$. 
(2.) Rewiring $i_2 \to i_1, i_3 \to i_2, i_1 \to i_3$. \\
(3.) Rewiring $i_1 \to i_2, i_2 \to i_3, i_1 \to i_3$.
(4.) Rewiring $i_1 \to i_2, i_3 \to i_2, i_3 \to i_1$. \\
(5.) Rewiring $i_1 \to i_2, i_3 \to i_2, i_1 \to i_3$.  
(6.) Rewiring $i_2 \to i_1, i_2 \to i_3, i_3 \to i_1$. \\
(7.) Rewiring $i_2 \to i_1, i_2 \to i_3, i_1 \to i_3$. 
(8.) Rewiring $i_2 \to i_1, i_3 \to i_2, i_3 \to i_1$. \\
\indent Consider the case each vertex rewires once to another vertex. Since the distance on the torus between each pair of two vertices are far $(\ge \frac{k}{2})$, by Lemma $6.8$, it follows that $\mathbb{P}(i_1 \to i_2) = O_{k}(\frac{1}{n})$, $\mathbb{P}(i_2 \to i_3) = O_{k}(\frac{1}{n})$, $\mathbb{P}(i_3 \to i_1) = O_{k}(\frac{1}{n})$, $\mathbb{P}(i_1 \to i_3) = O_{k}(\frac{1}{n})$, $\mathbb{P}(i_3 \to i_2) = O_{k}(\frac{1}{n})$, and $\mathbb{P}(i_2 \to i_1) = O_{k}(\frac{1}{n})$.
In addition, one vertex can rewire to both other two vertices. By Lemma $6.10$, we have $\mathbb{P}(i_1 \to i_2, i_1 \to i_3) = O_{k}(\frac{1}{n^2})$, $\mathbb{P}(i_2 \to i_1, i_2 \to i_3) = O_{k}(\frac{1}{n^2})$, and $\mathbb{P}(i_3 \to i_1, i_3 \to i_2) = O_{k}(\frac{1}{n^2})$. Thus, the bound of the probability is 
\begin{align*}
    \mathbf{P}_4 &= \mathbb{P}(i_1 \to i_2, i_2 \to i_3, i_3 \to i_1) + \mathbb{P}(i_2 \to i_1, i_3 \to i_2, i_1 \to i_3) \\ &+ \mathbb{P}(i_1 \to i_2, i_2 \to i_3, i_1 \to i_3) + \mathbb{P}(i_1 \to i_2, i_3 \to i_2, i_3 \to i_1) \\ &+ \mathbb{P}(i_1 \to i_2, i_3 \to i_2, i_1 \to i_3) + \mathbb{P}(i_2 \to i_1, i_2 \to i_3, i_3 \to i_1) \\ &+ \mathbb{P}(i_2 \to i_1, i_2 \to i_3, i_1 \to i_3) + \mathbb{P}(i_2 \to i_1, i_3 \to i_2, i_3 \to i_1) \\ &= \mathbb{P}(i_1 \to i_2)\cdot \mathbb{P}(i_2 \to i_3)\cdot \mathbb{P}(i_3 \to i_1) + \mathbb{P}(i_2 \to i_1)\cdot \mathbb{P}(i_3 \to i_2)\cdot\mathbb{P}(i_1 \to i_3) \\ &+ \mathbb{P}(i_1 \to i_2, i_1 \to i_3)\cdot \mathbb{P}(i_2 \to i_3) + \mathbb{P}(i_1 \to i_2)\cdot \mathbb{P}(i_3 \to i_2, i_3 \to i_1) \\ &+ \mathbb{P}(i_1 \to i_2, i_1 \to i_3)\cdot \mathbb{P}(i_3 \to i_2) + \mathbb{P}(i_2 \to i_1, i_2 \to i_3)\cdot \mathbb{P}(i_3 \to i_1) \\ &+ \mathbb{P}(i_2 \to i_1, i_2 \to i_3)\cdot \mathbb{P}(i_1 \to i_3) + \mathbb{P}(i_2 \to i_1)\cdot \mathbb{P}( i_3 \to i_2, i_3 \to i_1)
\end{align*}
We plug in the above results to the formula $\mathbf{P}_4$. Thus, we have
\begin{align*}
    \mathbf{P}_4 &\le 2 \cdot O_{k}(\frac{1}{n}) \cdot O_{k}(\frac{1}{n}) \cdot O_{k}(\frac{1}{n}) + 6 \cdot O_{k}(\frac{1}{n^2}) \cdot O_{k}(\frac{1}{n}) \\ &\le O_{k}(\frac{1}{n^3}).
\end{align*}
\end{proof}
\section{The Counting Sum}
This section computes the number of permutations of three vertices $i_1,i_2,i_3$ in the $SW(n,k,p)$ random graph. Since each configuration we use a bound of the probability, we can compute all permutations of vertices $i_1,i_2,i_3$ which satisfy each one of four configurations. We divide this section into four different configurations which are \textbf{all close}, \textbf{one far}, \textbf{two far}, and \textbf{all far}. \\
\noindent \textbf{1.} Computation of $\mathbf{C}_1$ with the case \textbf{all close} \\
\indent Let $\{1,2,...,n\}$ be a set of vertices in $SW(n,k,p)$ random graph. There are $n$ possible choices from the vertex set assigned to be vertex $i_1$. Each choice of $i_1$ can have $k$ possible choices of vertex $i_2$. It seems that each choice of $i_2$ we can also assign vertex $i_3$ with $k$ possible choices. However, we need to reconsider the choices of placing vertex $i_3$ that satisfy the distance on the torus between $i_1$ and $i_3$ for $||i_1-i_3|| \le \frac{k}{2}$. \\
\indent We start computing on the downside neighbors of vertex $i_1$. If $i_2 = i_1+1$, there are $\frac{k}{2}-1$ choices on the upside and $\frac{k}{2}-1$ on the downside to put vertex $i_3$. If $i_2 = i_1+2$, there are $\frac{k}{2}-1$ choices on the upside and $\frac{k}{2}-2$ on the downside to put vertex $i_3$. We continue this procedure of computing until $i_2 = i_1 + \frac{k}{2}$ the left far edge of $i_1$ neighbors. In this case, there are still $\frac{k}{2}-1$ choices on the upside but $0$ choice on the downside to put vertex $i_3$. It makes sense because if assigning vertex $i_3$ on the downside of $i_2$, the distance between $i_1,i_3$ will be larger than $\frac{k}{2}$. Then we sum all choices of $i_3$ for each vertex $i_2$ and multiply by $2$ because it can possibly occur on the upside neighbors of vertex $i_1$. Hence,
\begin{align*}
    \mathbf{C}_1 &= 2n. \Bigg[ \big(0+(\frac{k}{2}-1)\big)+ \big(1+(\frac{k}{2}-1)\big) + \big(2+(\frac{k}{2}-1)\big) + ... + \big((\frac{k}{2}-1) \\ &+(\frac{k}{2}-1)\big) \Bigg] \\ &= 2n.\Bigg[\bigg(0+1+2+...+(\frac{k}{2}-1)\bigg) + (\frac{k}{2})(\frac{k}{2}-1) \Bigg] \\ &= 2n.\Bigg[\left(\frac{(\frac{k}{2})(\frac{k}{2}-1)}{2}\right) + (\frac{k}{2})(\frac{k}{2}-1)\Bigg] \\ &= n\bigg(\frac{k}{2}-1\bigg)\bigg(\frac{k}{2} + k\bigg) = n\bigg(\frac{k-2}{2}\bigg)\bigg(\frac{3k}{2}\bigg) = \frac{3}{4}nk(k-2).
\end{align*}
\noindent \textbf{2.} Computation of $\mathbf{C}_2$ with the case \textbf{one far}
\\  \indent For a vertex set $\{1,2,...,n\}$ of the random graph, there are $n$ choices to assign a first vertex $i_1$. Each choice of $i_1$ can assign another vertex $i_2$ with $k$ possible choices of $i_1$ neighbors which are $\frac{k}{2}$ choices on the upside and $\frac{k}{2}$ choices on the downside. Since each side of $i_1$ neighbors is likely similar to figure out the location of $i_2,i_3$, we can start computing the downside of $i_1$ neighbors and then multiply by $2$. In this case, we must consider the distance on the torus between $i_1,i_2$ and $i_2,i_3$ are less than than or equal to $\frac{k}{2}$, but the distance on the torus between $i_3,i_1$ is larger than $\frac{k}{2}$. 
If $i_2 = i_1+1$, then there is just $1$ choice for placing vertex $i_3$ at  vertex $i_1+(\frac{k}{2}+1)$. If $i_2 = i_1 +2$, there are $2$ choices for placing vertex $i_3$ at either vertex $i_1+(\frac{k}{2}+1)$ or $i_1+(\frac{k}{2}+2)$. We continue this computation until $i_2 = i_1+\frac{k}{2}$, a far edge of $i_1$'s downside neighbors. There are $\frac{k}{2}$ choices on the downside of vertex $i_2$. We sum all possible choices of $i_3$ for each vertex $i_2$ and then multiply by $2$. Hence,
\begin{align*}
    \mathbf{C}_2 &= 2n.\Bigg[1+2+...+\frac{k}{2}\Bigg] = 2n.\bigg(\frac{(\frac{k}{2})(\frac{k}{2}+1)}{2}\bigg) = n\big(\frac{k}{2}\big)\big(\frac{k+2}{2}\big) = \frac{nk}{4}(k+2) \\ &\le O_{k}(n).
\end{align*}
\noindent \textbf{3.} Computation of $\mathbf{C}_3$ with the case \textbf{two far}
\\ \indent Let $\{1,2,...,n\}$ be a vertex set in the random graph, there are $n$ choices from all vertices to be vertex $i_1$. Each choice of $i_1$ we can assign another vertex $i_2$ with $k$ possible choices of $i_1$'s neighbors, which are $\frac{k}{2}$ on the upside and $\frac{k}{2}$ on the downside. We initially consider the downside of $i_1$'s neighbors and then multiply by $2$ because each side of the neighbors is symmetrical. For each location of $i_2$, we must include all possible choices of $i_3$ that make the distance between $i_2,i_3$ and $i_1,i_3$ are greater than $\frac{k}{2}$. If $i_2 = i_1+1$, then there are $(n-k-1)-1$ choices to be vertex $i_3$ (not $i_1$ itself, its $k$ neighbors and $i_2$'s neighbors). If $i_2 = i_1+2$, there are $(n-k-1)-2$ choices of $i_3$. We continue to sum all number of $i_3$ choices until $i_2 = i_1+\frac{k}{2}$. \\ There are $(n-k-1)-(\frac{k}{2})$ choices for $i_3$. Hence,
\begin{align*}
    \mathbf{C}_3 &= 2n.\Bigg[\bigg((n-k-1)-1\bigg) + \bigg((n-k-1)-2\bigg) + ... + \bigg((n-k-1)-(\frac{k}{2})\bigg) \Bigg] \\ &= 2n.\Bigg[\bigg(\frac{k}{2}\bigg)\bigg(n-k-1\bigg) - \bigg(1+2+...+ \frac{k}{2}\bigg) \Bigg] \\ &= 2n.\Bigg[\bigg(\frac{k}{2}\bigg)\bigg(n-k-1\bigg) - \bigg(\frac{(\frac{k}{2})(\frac{k}{2}+1)}{2}\bigg) \Bigg] \\ &= nk\Bigg[n-k-1 - \frac{k}{4}-\frac{1}{2}\Bigg] \\ &= nk(n-\frac{5k}{4}-\frac{3}{2}) \\ &\le O_{k}(n^2).
\end{align*}
\noindent \textbf{4.} Computation of
$\mathbf{C}_4$ with the case \textbf{all far} \\ \indent In this part, we will compute the upper bound of the number of permutation of the case \textbf{all far} configuration. It starts with all $n$ choices from a vertex set $\{1,2,...,n\}$ to assign vertex $i_1$. Each choice of $i_1$ has $n-k-1$ possible choices of $i_2$ to make a distance on the torus between vertices $i_1$ and $i_2$ being greater than $\frac{k}{2}$. In each choice of $i_2$, we can find at most $n-k-1-(k+1)$ (not $i_1$'s neighbors and $i_2$'s neighbors) choices to assign vertex $i_3$. Hence,
\begin{align*}
    \mathbf{C}_4 \le n(n-k-1)[n-k-1-(k+1)] = n(n-k-1)(n-2k-2) \le O_{k}(n^3).
\end{align*}
\section{The Limiting Third Moment}
We end up the main proof of the limiting moment of the eigenvalue distribution of $SW(n,k,p)$ random graph. 
\begin{proof} By Lemma $5.16$, a new generalized formula of the third moment of the eigenvalue distribution is
 \begin{align*}
     \mathbb{E}[\frac{1}{n}\mathrm{Tr}(A_n)^3] &= \frac{1}{n}\bigg[\mathbf{C}_1\mathbf{P}_1 + O(\mathbf{C}_2\mathbf{P}_2) + O(\mathbf{C}_3\mathbf{P}_3) + O(\mathbf{C}_4\mathbf{P}_4)\bigg].
 \end{align*}
 Next, we use Lemmas $6.1$, $6.6$, $6.11$, and $6.12$ to plug in all probabilities of each configuration $\mathbf{P}_i$ and the number of all permutations of vertices satisfying each configuration $\mathbf{C}_i$ from \textbf{Section 7}, for $i=1,2,3,4$. Hence, it follows that
 \begin{align*}
     \mathbb{E}[\frac{1}{n}\mathrm{Tr}(A_n)^3] &= \frac{1}{n}\Bigg[\frac{3}{4}nk(k-2)(1-p)^3 + O_{k}(n)\cdot O_{k}(\frac{1}{n}) + O_{k}(n^2) \cdot O_{k}(\frac{1}{n^2}) \\ &+ O_{k}(n^3) \cdot O_{k}(\frac{1}{n^3})\Bigg] \\ &= \frac{3}{4}k(k-2)(1-p)^3 + O_{k}(\frac{1}{n}).
 \end{align*}
 The last equality holds by the simplification. Then, we take the limit of the third moment of the eigenvalue distribution as $n \to \infty$,
 \begin{align*}
     \lim_{n \to \infty} \mathbb{E}[\frac{1}{n}\mathrm{Tr}(A_n)^3] &= \frac{3}{4}k(k-2)(1-p)^3.
 \end{align*}
 Therefore, we prove Theorem 3.11. 
 \end{proof}

\section{Acknowledgements}
I give a special thank to Dr.Sean O'Roukre for the advice on my undergraduate honors thesis which is a draft version of this paper, and also all anonymous readers for helpful comments.

\bibliographystyle{plain}
\nocite{*}

\end{document}